\numberwithin{equation}{section}
\numberwithin{figure}{section}
\theoremstyle{plain}
\newtheorem{thm}{\protect\theoremname}[section]
\theoremstyle{remark}
\newtheorem{rem}[thm]{\protect\remarkname}
\theoremstyle{plain}
\newtheorem{cor}[thm]{\protect\corollaryname}
\theoremstyle{definition}
\newtheorem{convention}[thm]{\protect\conventionname}
\theoremstyle{definition}
\newtheorem{defn}[thm]{\protect\definitionname}
\theoremstyle{definition}
\newtheorem{warning}[thm]{\protect\warningname}
\theoremstyle{definition}
\newtheorem{notation}[thm]{\protect\notationname}
\theoremstyle{definition}
\newtheorem{construction}[thm]{\protect\constructionname}
\theoremstyle{plain}
\newtheorem{prop}[thm]{\protect\propositionname}
\theoremstyle{definition}
\newtheorem{recollection}[thm]{\protect\recollectionname}
\theoremstyle{plain}
\newtheorem{lem}[thm]{\protect\lemmaname}
\theoremstyle{definition}
\newtheorem{example}[thm]{\protect\examplename}
\DeclareMathSymbol{:}{\mathpunct}{operators}{"3A}
\providecommand{\constructionname}{Construction}
\providecommand{\conventionname}{Convention}
\providecommand{\corollaryname}{Corollary}
\providecommand{\definitionname}{Definition}
\providecommand{\examplename}{Example}
\providecommand{\lemmaname}{Lemma}
\providecommand{\notationname}{Notation}
\providecommand{\propositionname}{Proposition}
\providecommand{\recollectionname}{Recollection}
\providecommand{\remarkname}{Remark}
\providecommand{\theoremname}{Theorem}
\providecommand{\warningname}{Warning}
\begin{document}
\global\long\def\sf#1{\mathsf{#1}}%

\global\long\def\cal#1{\mathcal{#1}}%

\global\long\def\bb#1{\mathbb{#1}}%

\global\long\def\bf#1{\mathbf{#1}}%

\global\long\def\u#1{\underline{#1}}%

\global\long\def\tild#1{\widetilde{#1}}%

\global\long\def\pr#1{\left(#1\right)}%

\global\long\def\abs#1{\left|#1\right|}%

\global\long\def\inp#1{\left\langle #1\right\rangle }%

\global\long\def\opn#1{\operatorname{#1}}%

\global\long\def\Set{\sf{Set}}%

\global\long\def\SS{\sf{sSet}}%

\global\long\def\Cat{\mathcal{C}\sf{at}}%

\global\long\def\Fin{\sf{Fin}}%

\global\long\def\Del{\mathbf{\Delta}}%

\global\long\def\id{\operatorname{id}}%

\global\long\def\-{\text{-}}%

\global\long\def\op{\mathrm{op}}%

\global\long\def\To{\Rightarrow}%

\global\long\def\rr{\rightrightarrows}%

\global\long\def\rl{\rightleftarrows}%

\global\long\def\mono{\rightarrowtail}%

\global\long\def\epi{\rightsquigarrow}%

\global\long\def\ot{\leftarrow}%

\global\long\def\lim{\operatorname{lim}}%

\global\long\def\colim{\operatorname{colim}}%

\global\long\def\holim{\operatorname{holim}}%

\global\long\def\hocolim{\operatorname{hocolim}}%

\global\long\def\Fun{\operatorname{Fun}}%

\global\long\def\Tw{\operatorname{Tw}}%

\global\long\def\Path{\operatorname{Path}}%

\global\long\def\Alg{\operatorname{Alg}}%

\global\long\def\mor{\operatorname{mor}}%

\global\long\def\Fact{\operatorname{Fact}}%

\global\long\def\Perm{\operatorname{Perm}}%

\global\long\def\weq{\mathrm{weq}}%

\global\long\def\fib{\mathrm{fib}}%

\global\long\def\inert{\mathrm{inert}}%

\global\long\def\act{\mathrm{act}}%

\global\long\def\cart{\mathrm{cart}}%

\global\long\def\xmono#1#2{\stackrel[#2]{#1}{\rightarrowtail}}%

\global\long\def\xepi#1#2{\stackrel[#2]{#1}{\rightsquigarrow}}%

\global\long\def\adj{\stackrel[\longleftarrow]{\longrightarrow}{\bot}}%

\global\long\def\btimes{\boxtimes}%

\global\long\def\t{\otimes}%

\global\long\def\S{\mathsection}%

\global\long\def\p{\prime}%

\global\long\def\pp{\prime\prime}%

\global\long\def\from{\colon}%

\global\long\def\ev{\operatorname{ev}}%

\title{Monoidal Relative Categories Model Monoidal $\infty$-Categories}
\begin{abstract}
We prove that the homotopy theory of monoidal relative categories
is equivalent to that of monoidal $\infty$-categories, and likewise
in the symmetric monoidal setting. As an application, we give
a concise and complete proof of the fact that every presentably
monoidal or presentably symmetric monoidal $\infty$-category
is presented by a monoidal or symmetric monoidal model category,
which, in the monoidal case, was sketched by Lurie, and in the
symmetric monoidal case, was proved by Nikolaus--Sagave.
\end{abstract}

\author{Kensuke Arakawa}
\address{Department of Mathematics, Kyoto University, Kyoto, 606-8502,
Japan}
\email{arakawa.kensuke.22c@st.kyoto-u.ac.jp}
\subjclass[2020]{18M05, 19D23, 18N55, 18N60, 55U35, 55P47,}

\maketitle
\tableofcontents{}

\section*{Introduction}


Many examples of nontrivial $\pr{\infty,1}$-categories, or $\infty$-categories
for short, arise from relative categories. Recall that a \textbf{relative
category} is a category $\cal C$ equipped with a subcategory
$\cal W\subset\cal C$ of \textbf{weak equivalences}, which contains
all isomorphisms of $\cal C$.\footnote{This definition deviates slightly from Barwick and Kan's definition
in \cite{BK12b}, where they only require that the subcategory
of weak equivalences contain all identity morphisms. But the
difference is minor: Every relative category in the sense of
Barwick and Kan can be made into a relative category in our sense
by adjoining all isomorphisms to the subcategory of weak equivalences,
and this does not affect the localization. Our definition is
better suited when dealing with functors that are defined up
to natural isomorphisms.} Starting from a relative category $\pr{\cal C,\cal W}$, one
can formally invert the morphisms in $\cal W$ to obtain an $\infty$-category
$\cal C[\cal W^{-1}]$, called the \textbf{localization} of $\cal C$
at $\cal W$. One reason for the success of Quillen's theory
of model categories \cite{Quillen_HA}, and more broadly, homotopical
methods in various areas of mathematics, rests on the fact that
the $\infty$-category $\cal C[\cal W^{-1}]$ carries a very
rich structure: For example, Dwyer and Kan showed that when $\cal C$
is a model category and $\cal W$ is the subcategory of weak
equivalences, then the mapping spaces of $\cal C[\cal W^{-1}]$
are exactly the derived mapping spaces \cite{DK80_3}. 

A natural question to ask is which $\infty$-categories can be
realized as a localization of a relative category. Dwyer and
Kan \cite[Theorem 2.5]{DK87} and Joyal \cite[13.6]{Joy_notes_qCat}
proved a striking result in this direction, showing that \textit{every}
$\infty$-category is a localization of a relative category.\footnote{Joyal's preprint does not contain a proof of this, and a proof
was published by Stevenson \cite{Ste2017}.} Later, Barwick and Kan built on this and showed that the homotopy
theory of relative categories is in fact \textit{equivalent}
to that of $\infty$-categories \cite{BK12b}. To be more precise,
they showed that the functor $\sf{RelCat}\to\Cat_{\infty}$ from
the (ordinary) category of relative categories to the $\infty$-category
of $\infty$-categories induces an equivalence of $\infty$-categories
\[
\sf{RelCat}[{\rm DK}^{-1}]\xrightarrow{\simeq}\Cat_{\infty}.
\]
Here ${\rm DK}$ denotes the subcategory of \textbf{DK-equivalences},
i.e., morphisms of relative categories that induce categorical
equivalences between the localizations.

The passage from relative categories to their localizations can
be adapted to monoidal categories: Define a \textbf{monoidal
relative category} to be a relative category $\pr{\cal C,\cal W}$
equipped with a monoidal structure on $\cal C$, such that morphisms
in $\cal W$ are stable under tensor products in $\cal C$. One
can show that the localization $\cal C[\cal W^{-1}]$ inherits
a monoidal structure in such a way that the localization $L:\cal C\to\cal C[\cal W^{-1}]$
is monoidal; moreover, the functor $L$ is initial among the
monoidal functors that invert morphisms in $\cal W$ \cite[Proposition 4.1.7.4]{HA}. 

Given this, it is natural to ask which monoidal $\infty$-categories
arise as a localization of a monoidal relative category. Better
yet, we should ask how the homotopy theory of monoidal relative
categories is related to that of monoidal $\infty$-categories.
The main theorem of this paper answers these questions:
\begin{thm}
[Theorem {\ref{thm:main}}]\label{thm:intro}Monoidal localization
determines an equivalence of $\infty$-categories
\[
\sf{MonRelCat}[{\rm DK}^{-1}]\xrightarrow{\simeq}\cal M\sf{on}\Cat_{\infty},
\]
where $\sf{MonRelCat}[{\rm DK}^{-1}]$ denotes the localization
of the category of monoidal relative categories at the monoidal
functors that are DK-equivalences, and $\cal M\sf{on}\Cat_{\infty}$
denotes the $\infty$-category of monoidal $\infty$-categories.
A similar result holds in the symmetric monoidal case, too.
\end{thm}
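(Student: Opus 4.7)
My plan is to reduce the theorem to the ordinary Barwick-Kan equivalence $\sf{RelCat}[{\rm DK}^{-1}]\xrightarrow{\simeq}\Cat_\infty$ by viewing monoidal objects on both sides as Segal-type simplicial objects. A monoidal $\infty$-category is equivalent data to a functor $\cal D^\bullet\colon N\Del^\op\to\Cat_\infty$ satisfying the Segal condition ($\cal D^0\simeq\ast$ and $\cal D^n\xrightarrow{\simeq}\cal D^1\times\cdots\times\cal D^1$), while a monoidal relative category $(\cal C,\cal W,\t)$ gives rise to the strict simplicial relative category $\cal C^{\times\bullet}$, whose face and degeneracy maps encode the monoidal structure and which satisfies the Segal condition on the nose. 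The first step is to identify $\sf{MonRelCat}$ with this full subcategory of $\sf{RelCat}^{\Del^\op}$, and similarly to recall that $\cal M\sf{on}\Cat_\infty$ is the full subcategory of Segal objects in $\Fun(N\Del^\op,\Cat_\infty)$.

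The second step supplies the central technical input: the localization functor $L\colon\sf{RelCat}\to\Cat_\infty$ preserves finite products, i.e., the canonical comparison $(\cal C\times\cal D)[(\cal W\times\cal V)^{-1}]\to\cal C[\cal W^{-1}]\times\cal D[\cal V^{-1}]$ is an equivalence for every pair of relative categories. Given this, pointwise application of $L$ sends Segal simplicial relative categories to Segal simplicial $\infty$-categories, yielding a functor $L_*\colon\sf{MonRelCat}\to\cal M\sf{on}\Cat_\infty$ that inverts monoidal DK-equivalences and hence descends to the comparison functor $\bar L_*\colon\sf{MonRelCat}[{\rm DK}^{-1}]\to\cal M\sf{on}\Cat_\infty$ of the theorem.

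For essential surjectivity, I would start from a Segal object $\cal D^\bullet$ in $\Cat_\infty$, lift each $\cal D^n$ via Barwick-Kan to a relative category $\cal C^n$, and strictify the coherent system of equivalences $\cal D^n\simeq(\cal D^1)^n$ into a genuine monoidal relative category structure on $\cal C^1$ whose monoidal localization recovers $\cal D$. For fully faithfulness, I would compute the mapping space in $\sf{MonRelCat}[{\rm DK}^{-1}]$ as a derived mapping space in a Segal-fibrant replacement of the diagram category $\sf{RelCat}^{\Del^\op}$, and use Barwick-Kan applied levelwise (promoted to diagram categories) to identify it with $\opn{Map}_{\cal M\sf{on}\Cat_\infty}$.

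The principal obstacle will be the strictification: passing from a pointwise Barwick-Kan equivalence to a strictly simplicial comparison requires either a Reedy-type model or fibrant-replacement structure on $\sf{RelCat}^{\Del^\op}$ in which Segal-fibrant objects correspond up to DK-equivalence to honest monoidal relative categories, or a direct combinatorial construction, for instance by unstraightening a cocartesian fibration of relative categories over $\Del^\op$. Once this is in place, the symmetric monoidal version should follow from the same outline with $N\Del^\op$ replaced by $N\Fin_\ast$ and the associative Segal condition replaced by its symmetric analogue, modulo the same product-preservation lemma.
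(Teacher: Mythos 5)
Your first step already fails: for a general (strong) monoidal relative category $(\cal C,\cal W,\t)$, the assignment $n\mapsto\cal C^{\times n}$ with face maps given by tensoring adjacent coordinates is \emph{not} a strict functor $\Del^{\op}\to\sf{RelCat}$. The simplicial identities involving interior face maps force you to use the associator, so $\cal C^{\bullet}$ is only a pseudofunctor (and in the symmetric case, even after passing to permutative categories, composition still requires the symmetry, because iterating the tensor over $u^{-1}(j)$ and then over $v^{-1}(k)$ does not match the canonical ordering of $(vu)^{-1}(k)$). Similarly, a strong monoidal functor only gives a pseudonatural transformation of these pseudofunctors, so $\sf{MonRelCat}$ is not a full subcategory of $\Fun(\Del^{\op},\sf{RelCat})$ in the way you assert. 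This is precisely the obstruction that the paper's $\Fact$ construction (Construction \ref{const:Fact}, following May and Mandell) is designed to resolve: it produces a \emph{genuine} functor $\Fact(\cal C)\colon\Fin_{\ast}\to\sf{RelCat}$ satisfying the Segal condition, together with a levelwise DK-equivalence to the pseudofunctor $\cal C^{\bullet}$. The whole of Section \ref{sec:Man10} (Theorem \ref{thm:Man10}, constructing the homotopy inverse $\Perm$ and verifying that $\Fact\circ\Perm$ and $\Perm\circ\Fact$ are connected to the identity through zig-zags of natural weak equivalences) is the content you are skipping by treating the identification as an equality. You do flag ``strictification'' as the principal obstacle, but you locate it only on the essential-surjectivity side, when lifting a Segal object of $\Cat_{\infty}$ back to a monoidal relative category; the problem already occurs going the easy direction, from $\sf{MonRelCat}$ into diagram categories.

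The other half of your outline is sound and matches what the paper actually does. That $L\colon\sf{RelCat}\to\Cat_{\infty}$ preserves finite products is used implicitly throughout, and the ``diagram-level Barwick--Kan'' statement you want --- that levelwise localization induces
\[
\Fun^{{\rm Seg}}(\Fin_{\ast},\sf{RelCat})[{\rm DK}^{-1}]\xrightarrow{\ \simeq\ }\Fun^{{\rm Seg}}(\Fin_{\ast},\Cat_{\infty})
\]
--- is exactly Corollary \ref{cor:FunSeg}, proved via the marked-simplicial-set model and a Bousfield localization of the projective model structure. So your plan would become correct if you replaced the false ``identify $\sf{MonRelCat}$ with Segal objects in $\sf{RelCat}^{\Del^{\op}}$'' with ``construct a homotopy equivalence of relative categories between $\sf{PermRelCat}$ and $\Fun^{{\rm Seg}}(\Fin_{\ast},\sf{RelCat})$,'' but that replacement is the substance of the theorem, not a preliminary identification.
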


\begin{rem}
According to \cite[Theorem 6.1 (iv)]{BK12b}, every $\infty$-category
is a localization of a relative \textit{poset}. We do not know
if this is true monoidally, i.e., if every monoidal $\infty$-category
is a localization of a relative monoidal poset. (The symmetric
monoidal analog is false. For example, if $\mathcal{C}$ is a
symmetric monoidal category which is not equivalent to a strict
symmetric monoidal category, then $\cal C$ can never be a symmetric
monoidal localization of a symmetric monoidal relative poset.)
\end{rem}

Joyal's delocalization theorem and Barwick--Kan's theorem come
in handy when one wants to prove generic statements about $\infty$-categories,
as they allow us to reduce the statements to those of ordinary
categories. (See \cite{Ste2017, Arl20, JacoMT} for example.)
Similarly, Theorem \ref{thm:intro} is useful in proving statements
about monoidal $\infty$-categories. As an example of this, we
will see that the following well-known result follows immediately
from Theorem \ref{thm:intro}:
\begin{cor}
[Theorem {\ref{thm:NS17}}]\label{cor:intro2}Every presentably
monoidal $\infty$-category is presented by a combinatorial monoidal
model category. A similar result holds in the symmetric monoidal
case, too.
\end{cor}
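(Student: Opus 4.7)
The plan is to combine Theorem \ref{thm:intro} with a standard construction involving simplicial presheaves and Day convolution, and then perform a monoidal Bousfield localization. I describe the monoidal case; the symmetric monoidal case follows by the same strategy.

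First, given a presentably monoidal $\infty$-category $\cal D^\otimes$, I would use the universal property of Day convolution on presheaf $\infty$-categories to realize $\cal D^\otimes$ as an accessible monoidal localization
\[
L\colon \cal P(\cal A)^\otimes \to \cal D^\otimes
\]
at a small set $S$ of morphisms, for some small monoidal $\infty$-category $\cal A^\otimes$; here $\cal P(\cal A)$ carries the Day convolution monoidal structure.

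Second, Theorem \ref{thm:intro} applied to $\cal A^\otimes$ produces a small monoidal relative category $\pr{\cal B,\cal V}$ whose monoidal localization is $\cal A^\otimes$. Then $\cal M := \SS^{\cal B^\op}$ equipped with the projective model structure and Day convolution tensor product $\t$ forms a combinatorial monoidal simplicial model category, whose underlying monoidal $\infty$-category is $\cal P(\cal B)^\otimes$. I would then form the left Bousfield localization $\cal M_{\mathrm{loc}}$ of $\cal M$ at a set $T$ consisting of the Yoneda image of the morphisms in $\cal V$ (to pass from $\cal P(\cal B)$ to $\cal P(\cal A)$), together with lifts to $\cal M$ of the morphisms in $S$ (to pass from $\cal P(\cal A)$ to $\cal D$). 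The underlying $\infty$-category of $\cal M_{\mathrm{loc}}$ is then $\cal D$ by construction.

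The main obstacle lies in the last step: verifying that $\cal M_{\mathrm{loc}}$ is again a monoidal model category, i.e., that the Bousfield localization is compatible with Day convolution. By the standard pushout-product criterion, this amounts to checking that for each $t \in T$ and each generating cofibration $f$ of $\cal M$, the pushout-product $t \square f$ is a $T$-local equivalence. This stability should hold because the elements of $T$ all descend to equivalences in the monoidal $\infty$-category $\cal D^\otimes$, so their tensor products with arbitrary objects of $\cal M$ also descend to equivalences in $\cal D^\otimes$, and are therefore $T$-local. In the symmetric case, we use instead the symmetric Day convolution and a symmetric monoidal Bousfield localization criterion; the key leverage provided by Theorem \ref{thm:intro} is that all $\infty$-categorical choices have been replaced by choices in the ordinary category of monoidal relative categories, so that the construction of $\cal M$ and the verification of the monoidal axioms are entirely classical.
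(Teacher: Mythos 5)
Your approach is essentially the same as the paper's: both reduce to presheaves with Day convolution over a strict $1$-categorical indexing category obtained from the main theorem, present $\cal P(\cal B)$ (resp.\ $\cal P(\cal A_0)$) by projective simplicial presheaves, and then Bousfield localize. The one substantive difference is that the paper delegates the reduction to the presheaf case and the monoidal Bousfield localization to \cite[Propositions 2.3 and 2.4]{NS17}, whereas you attempt to carry out the monoidal Bousfield localization directly. Your argument for the compatibility is correct in substance: since $T$ consists of morphisms becoming equivalences in the monoidal $\infty$-category $\cal D$, and the localization $\cal P(\cal B)\to\cal D$ is monoidal, the class of $T$-local equivalences is a tensor ideal (tensoring with cofibrant objects preserves it). However, you state the pushout-product criterion ($t\square f$ is a $T$-local equivalence) and then only verify that $t\otimes X$ is a $T$-local equivalence for cofibrant $X$; the bridge from the tensor-ideal condition to the pushout-product condition still requires the standard gluing/left-properness argument (factor $t\square f$ through the pushout of $t\otimes C$ along $A\otimes f$ and apply 2-out-of-3). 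This is a routine but genuine step you have elided, and it is exactly what \cite[Proposition 2.3]{NS17} packages. You would also want to note that cofibrant lifts of the morphisms in $S$ are needed, and that the monoidal unit of the projective model structure is cofibrant, which the paper includes in its statement. With these details filled in, your argument essentially reproves the relevant portions of Nikolaus--Sagave rather than citing them, which is a reasonable alternative but somewhat less economical.
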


\begin{rem}
A proof of Corollary \ref{cor:intro2} was sketched in \cite[Remark 4.1.8.9]{HA},
but it seems that the detail of the proof has never been given.
Later, the author learned on MathOverflow \cite{488608} that
the details can be filled by Ramzi's recent work on the monoidal
Grothendieck construction \cite{Ram22}. In the symmetric monoidal
case, Corollary \ref{cor:intro2} was established by Nikolaus
and Sagave \cite{NS17}. Both Lurie and Nikolaus--Sagave use
techniques that are different from ours.
\end{rem}

We conclude this introduction by sketching our strategy for Theorem
\ref{thm:intro}. The starting point is Thomason's theorem \cite{Tho95}
and its refinement by Mandell \cite{Man10}. Thomason's theorem
asserts that every grouplike $E_{\infty}$-algebra (or infinite
loop spaces in spaces) arises from a symmetric monoidal category.
Mandell later refined this by showing that the homotopy theory
of $E_{\infty}$-algebras in spaces is equivalent to a localization
of the category of symmetric monoidal categories at the functors
inducing equivalences between classifying spaces. Theorem \ref{thm:intro}
may be regarded as a natural generalization of these theorems,
where categories are replaced by relative categories and spaces
by $\infty$-categories. Luckily, Mandell's arguments can mostly
be adapted to our setting, and we will prove Theorem \ref{thm:intro}
by making the necessary changes.

\subsection*{Outline of the Paper}

In Section \ref{sec:Man10}, we present a modification of Mandell's
argument \cite{Man10} and establish an equivalence between the
homotopy theory of permutative relative categories and that of
functors $\Fin_{\ast}\to\sf{RelCat}$ satisfying the Segal condition.
We then use this to prove Theorem \ref{thm:intro} in Section
\ref{sec:main}. As an application, we prove Corollary \ref{cor:intro2}
in Section \ref{sec:appl}. 

In the appendices, we collected some standard materials that
are well-known to the experts but are hard to find in the literature.
There are two appendices, one on relative categories and the
other on relative cartesian fibrations. These appendices should
be consulted as the need arises.

While we stated most of our results above for monoidal cases,
we will mainly be concerned with the symmetric monoidal cases,
as the latter needs more care than the former. It is straightforward
to adapt the arguments to the monoidal case, and we will leave
this task to the reader.

\subsection*{Notation and Conventions}
\begin{convention}
By an \textit{$\infty$-category}, we mean \textit{quasi-categories}
as developed by Joyal and Lurie \cite{Joyal_qcat_Kan,HTT}. We
will not notationally distinguish between ordinary categories
and their nerves. Unless stated otherwise, we will follow Lurie's
books \cite{HTT,HA} in terminology and notation.
\end{convention}

\begin{convention}
For various notions associated with $2$-categories (lax natural
transformations, lax colimit, etc.), we follow \cite{JY21}.
\end{convention}

\begin{defn}
We will write $\Fin_{\ast}$ for the category of the finite based
sets $\inp n=\pr{\{\ast,1,\dots,n\},\ast}$, $n\geq0$, and based
maps. A morphism $f:\inp n\to\inp m$ is said to be \textbf{inert}
if for each $1\leq i\leq m$, the inverse image $f^{-1}\pr i$
consists of exactly one element; if further the induced map $\u m\to\u n=\{1,\dots,n\}$
is order-preserving, we say that $f$ is \textbf{strongly inert}.
We say that $f$ is \textbf{active} if it carries the set $\u n$
into $\u m$. We will often depict an inert morphism by $\mono$
and active morphisms by $\rightsquigarrow$. Note that $f$ factors
uniquely as $f=f_{\act}f_{\inert}$, where $f_{\inert}$ is strongly
inert and $f_{\act}$ is active. 

For each $n\geq0$ and each $S\subset\u n$, we let $\rho^{S}:\inp n\to\inp{\abs S}$
denote the strongly inert map that carry each element $i\in\u n\setminus S$
to the base point. In the case where $S=\{i\}$ is a singleton,
we will write $\rho^{S}=\rho^{i}$. 
\end{defn}

\begin{rem}
\label{rem:nabla}When dealing with monoidal categories and monoidal
$\infty$-categories, we customarily replace the category $\Fin_{\ast}$
by the opposite of the category $\Del$ of finite nonempty ordinals
and poset maps. In this case, inert maps correspond to subinterval
inclusions, while active maps corresponds to maps that preserve
the minimum and maximum elements.

For the purpose of this paper, it is more convenient to replace
the category $\Del^{\op}$ by another isomorphic category $\nabla$,
which is defined as follows:
\begin{itemize}
\item Objects are the linearly ordered sets $[[n]]=\{-1<0<\dots<n\}$
where $n\geq0$.
\item Morphisms are the poset maps $[[n]]\to[[m]]$ preserving minimal
and maximal elements.
\end{itemize}
An isomorphism of categories $\phi\from\Del^{\op}\xrightarrow{\cong}\nabla$
is given in the following way: The poset $[[n]]$ can be identified
with the poset of downward-closed subsets of $[n]$, ordered
by inclusion. Explicitly, we identify each $x\in[[n]]$ with
the subset $\{i\in[n]\mid i\leq x\}\subset[n]$. With this identification,
we can associate to each poset map $u\from[n]\to[m]$ a map $\phi\pr u\from[[m]]\to[[n]]$,
given by $S\mapsto u^{-1}\pr S$, and this defines $\phi$. The
fact that $\phi$ is an isomorphism of categories follows from
the observation that, a poset map $[n]\to[m]$ determines and
is determined by a sequence
\[
\emptyset=S_{-1}\subset S_{0}\subset\cdots\subset S_{m}=[n]
\]
of downward-closed subsets of $[n]$. (Given such a sequence,
the corresponding map $[n]\to[m]$ carries $S_{i}\setminus S_{i-1}$
to $i\in[m]$.) 

Note that inert maps of $\Del^{\op}$ correspond to those maps
$[[m]]\to[[n]]$ such that, each non-extremum element of $[[m]]$
has a unique inverse image; active maps of $\Del^{\op}$ corresponds
to those maps $[[m]]\to[[n]]$ such that the preimages of extremum
elements are singleton.
\end{rem}

\begin{defn}
We let $\cal{SMC}\sf{at}_{\infty}$ denote the localization of
the ordinary category of symmetric monoidal $\infty$-categories
and symmetric monoidal functors (in the sense of \cite[Definition 2.0.0.7]{HA})
at the categorical equivalences. (For a concrete model of this
$\infty$-category, see \cite[Variant 2.1.4.13]{HA}.) 
\end{defn}

\begin{warning}
Unlike plain categories, we make a clear distinction between
an ordinary symmetric monoidal category $\cal C$ (as defined
in \cite[Chapter XI]{Mac98}) and the associated Grothendieck
opfibration $\cal C^{\t}\to\Fin_{\ast}$
\end{warning}

\begin{defn}
Let $\cal C$ be a monoidal category, and let $S=\{s_{1}<\dots<s_{n}\}$
be a finite totally ordered set (such as subsets of $\bb Z$).
We will write $\bigotimes_{S}:\cal C^{S}\to\cal C$ for the functor
given by $\pr{C_{s}}_{s\in S}\mapsto\pr{\cdots\pr{C_{s_{1}}\otimes C_{s_{2}}}\otimes\cdots}\otimes C_{s_{n}}$.
(When $S$ is empty, we interpret $\bigotimes_{S}$ as the constant
functor at the monoidal unit; when $S$ is a singleton, we interpret
$\bigotimes_{S}$ as the identity functor of $\cal C$.) We use
notations such as $\bigotimes_{i=1}^{n}$ in a similar way.
\end{defn}

\begin{defn}
\label{def:relcat}\textbf{Relative functors} of relative categories
are functors of underlying categories that preserve weak equivalences.
We let $\sf{RelCat}$ denote the $2$-category of relative categories,
relative functors, and natural weak equivalences (i.e., natural
transformations whose components are weak equivalences). Following
\cite[1.2]{BK12b}, we say that a relative functor $f:\cal C\to\cal D$
is a \textbf{homotopy equivalence} if there is a relative functor
$g:\cal D\to\cal C$ such that $gf$ and $fg$ are connected
by a zig-zag of natural weak equivalences. 

We will also make occasional use of \textbf{relative $\infty$-categories},
which are pairs $\pr{\cal C,\cal W}$ where $\cal C$ is an $\infty$-category
and $\cal W\subset\cal C$ is a subcategory containing all equivalences
of $\cal C$. We will write $L\pr{\cal C}=L\pr{\cal C,\cal W}=\cal C[\cal W^{-1}]$
for the localization of $\cal C$ at $\cal W$. Unless stated
otherwise, we will identify an $\infty$-category $\cal C$ with
the relative $\infty$-category whose weak equivalences are the
equivalences.
\end{defn}

\begin{convention}
By \textbf{monoidal functors} of monoidal categories, we will
always mean \textit{strong} monoidal functors in the sense of
\cite[Chapter XI]{Mac98}. We follow a similar convention for
symmetric monoidal functors. We say that a monoidal functor is
\textbf{strict} if its structure natural transformations are
the identity natural transformations.
\end{convention}

\begin{notation}
We let $\sf{SMCat}$ denote the category of small symmetric monoidal
categories and symmetric monoidal functors, and let $\sf{PermCat}\subset\sf{SMCat}$
denote the subcategory spanned by the permutative categories
and strict symmetric monoidal functors. (Recall that a symmetric
monoidal category is called a \textbf{permutative category} if
its underlying monoidal category is strict \cite[Definition 4.1]{May74}.)
\end{notation}

\begin{defn}
A \textbf{symmetric monoidal relative category} is a relative
category $\cal C$ equipped with a symmetric monoidal structure,
such that the tensor bifunctor $\otimes:\cal C\times\cal C\to\cal C$
is a relative functor. If the underlying symmetric monoidal category
is permutative, we say that $\cal C$ is a\textbf{ permutative
relative category}. We will write $\sf{SMRelCat}$ for the category
of symmetric monoidal relative categories and symmetric monoidal
functors whose underlying functors are relative functors. We
also write $\sf{PermRelCat}$ for the category of permutative
relative categories and strictly symmetric monoidal relative
functors.
\end{defn}

\begin{defn}
A functor $F:\Fin_{\ast}\to\sf{RelCat}$ is said to satisfy the
\textbf{Segal condition} if for each $n\geq0$, the inert maps
$\{\rho^{i}:\inp n\to\inp 1\}_{1\leq i\leq n}$ induces a DK-equivalence
$F\inp n\xrightarrow{\simeq}\prod_{1\leq i\leq n}F\inp 1$. We
will write $\Fun^{{\rm Seg}}\pr{\Fin_{\ast},\sf{RelCat}}\subset\Fun\pr{\Fin_{\ast},\sf{RelCat}}$
for the full subcategory spanned by the functors $\Fin_{\ast}\to\sf{RelCat}$
satisfying the Segal condition.
\end{defn}

\section{\label{sec:Man10}Variations of Mandell's Constructions}

In this section, we construct a pair of functors
\[
\Fact:\sf{PermRelCat}\rl\Fun^{{\rm Seg}}\pr{\Fin_{\ast},\sf{RelCat}}:\Perm
\]
(Constructions \ref{const:Fact} and \ref{const:Perm}). We can
regard $\sf{PermRelCat}$ as a relative category whose weak equivalences
are those whose images in $\sf{RelCat}$ are DK-equivalences;
we can also view $\Fun^{{\rm Seg}}\pr{\Fin_{\ast},\sf{RelCat}}$
as a relative category with weak equivalences given by natural
DK-equivalences. We will show that the functors above will be
relative functors for these structures of relative categories.
We then prove the following theorem:
\begin{thm}
\label{thm:Man10}The functors $\Fact$ and $\Perm$ are homotopy
equivalences of relative categories, which are homotopy inverses
of each other.
\end{thm}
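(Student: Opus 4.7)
The plan is to follow Mandell's argument in \cite{Man10}, modified so that all comparisons take place in the $2$-category of relative categories, with DK-equivalences in place of ordinary categorical equivalences. Since $\Fact$ and $\Perm$ are constructed explicitly in Constructions \ref{const:Fact} and \ref{const:Perm}, the task reduces to producing zig-zags of natural DK-equivalences witnessing $\Perm \circ \Fact \simeq \id$ on $\sf{PermRelCat}$ and $\Fact \circ \Perm \simeq \id$ on $\Fun^{\rm Seg}\pr{\Fin_{\ast},\sf{RelCat}}$. As a preliminary, I would verify that $\Fact$ and $\Perm$ are themselves relative functors; this should follow from the fact that both constructions are performed pointwise on underlying categories and preserve weak equivalences by design.

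For the direction $\Perm \circ \Fact \simeq \id$, I would exhibit a natural isomorphism. By inspection of the construction, $\Fact\pr{\cal C}\inp 1$ recovers the underlying relative category of $\cal C$, while the value at the unique active map $\inp 2 \to \inp 1$ recovers the tensor product of $\cal C$ together with its symmetry and associativity data. Packaging these naturally should yield a natural isomorphism $\Perm\pr{\Fact\pr{\cal C}}\cong\cal C$ in $\sf{PermRelCat}$, which is in particular a natural weak equivalence.

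For the direction $\Fact \circ \Perm \simeq \id$, I would construct a natural comparison map $\eta_{F}:\Fact\pr{\Perm\pr F}\to F$ for each Segal functor $F:\Fin_{\ast}\to\sf{RelCat}$ and argue that $\eta_{F}$ is levelwise a DK-equivalence. Concretely, $\Fact\pr{\Perm\pr F}\inp n$ parametrizes factorization data in the permutative relative category $\Perm\pr F$, which is built from $F\inp 1$ and $F\inp 2$; the map $\eta_{F}\inp n$ evaluates such data in $F\inp n$. The Segal condition ensures that $F\inp n$ is determined, up to DK-equivalence, by $F\inp 1$ together with the multiplication data encoded in $F\inp 2$, and these are precisely the ingredients used to define $\Perm\pr F$.

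The main obstacle will be checking that $\eta_{F}\inp n$ is a DK-equivalence of relative categories, not merely a categorical equivalence on underlying categories. Mandell's combinatorial argument establishes the unenriched version; in the relative setting one must further verify that the induced functor on localizations is an equivalence of $\infty$-categories. I would approach this by constructing an explicit relative functor (or zig-zag of relative functors) going back, using the factorization structure and the inert maps $\rho^{i}$, together with natural weak equivalences connecting the round trips to identities. Establishing these natural weak equivalences should reduce, via the Segal condition on $F$ and the compatibility of localization with finite products, to a diagrammatic argument mirroring Mandell's in the ordinary categorical setting.
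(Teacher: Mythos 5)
Your proposal has the right overall shape—produce two zig-zags of natural DK-equivalences, one witnessing $\Perm\circ\Fact\simeq\id$ and one witnessing $\Fact\circ\Perm\simeq\id$—and correctly identifies Propositions \ref{prop:Seg} and \ref{prop:Perm_und} as the verification that $\Fact$ and $\Perm$ are relative functors. However, both of your concrete comparison steps are too optimistic and would not go through as described.

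For $\Perm\circ\Fact\simeq\id$, you claim a natural \emph{isomorphism} $\Perm\pr{\Fact\pr{\cal C}}\cong\cal C$. This does not exist: $\Perm\pr{\Fact\pr{\cal C}}$ is a relative Grothendieck construction over $\Tw\pr{\Fin_\ast^{\act}}^{\op}$ with fiber over $\pr{\inp n\epi\inp m}$ equal to $\prod_{i}\Fact_{n_i}\pr{\cal C}$, which is a much larger category than $\cal C$. Even the fiber $\Fact_1\pr{\cal C}$ is only a homotopy retract of $\cal C$, not isomorphic to it (Proposition \ref{prop:Seg}(1) constructs an adjunction, not an isomorphism). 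What actually exists is a strict symmetric monoidal relative functor $\Perm\circ\Fact\pr{\cal C}\to\cal C$ (Construction \ref{const:Perm_Gamma}, built from the universal property of the Grothendieck construction as a lax colimit), which is then shown to be a DK-equivalence via the two-out-of-three property combined with Propositions \ref{prop:Seg}(1) and \ref{prop:Perm_und}.

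For $\Fact\circ\Perm\simeq\id$, you posit a natural comparison $\eta_F:\Fact\pr{\Perm\pr F}\to F$ on the nose. The paper explicitly notes that no well-behaved strict natural transformation seems to exist in either direction between $\Fact\circ\Perm$ and the identity. The resolution is a technical device you have not accounted for: one constructs an \emph{oplax} natural transformation $\eta_F:F\To\Fact\circ\Perm\pr F$ (note the direction, opposite to yours) whose component at $\inp n$ sends $X\in F\inp n$ to the $\Fact_n$-algebra $S\mapsto\pr{\inp{\abs{S}}\epi\inp 1, F\rho^S\pr X}$, and then converts this oplax transformation into a genuine zig-zag $\id\Leftarrow\Path\pr{\eta_\bullet}\Rightarrow\Fact\circ\Perm$ using a mapping-path-category construction (Constructions \ref{const:path} and \ref{const:lax->ord}). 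The left leg is always a natural DK-equivalence; the right leg is checked to be one for Segal $F$ by reducing, via the Segal condition on both sides, to the value at $\inp 1$, which in turn reduces to Proposition \ref{prop:Perm_und}. Also note that $\Perm\pr F$ is built from $F^{\Tw}$ and hence involves \emph{all} of the $F\inp n$, not just $F\inp 1$ and $F\inp 2$, so your heuristic description of the data is imprecise. Without the oplax-to-strict path construction, or an alternative mechanism to rigidify the comparison, the argument you sketch does not close.
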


The proof of Theorem \ref{thm:Man10} will be a modification
of Mandell's work \cite{Man10}. 

\subsection{\label{subsec:fact}The Functor \texorpdfstring{$\Fact$}{Fact}}

Given a symmetric monoidal relative category $\cal C$, we can
define a pseudofunctor $\cal C^{\bullet}:\Fin_{\ast}\to\sf{RelCat}$
by mapping each morphism $f:\inp n\to\inp m$ to the functor
\[
\cal C^{n}\to\cal C^{m},\,\pr{C_{i}}_{1\leq i\leq n}\mapsto\pr{\bigotimes_{i\in u^{-1}\pr j}C_{i}}_{1\leq j\le m}.
\]
The reason why this fails to be a strict functor is that tensor
products are in general not associative (or unital or commutative)
on the nose. Our functor $\Fact$ gives a rectification of this
pseudofunctor up to DK-equivalence, by keeping track of \textit{all}
possible ways to tensor objects. The construction is originally
due to May \cite{May78} (there attributed to Segal).
\begin{construction}
\label{const:Fact}We define a colored operad $\Fact_{n}$ of
\textbf{$n$-factorizations} as follows: Its colors are the subsets
$S\subset\{1,\dots,n\}$. There is a multi-morphism $\pr{S_{1},\dots,S_{k}}\to T$
if and only if $T$ is a disjoint union of the sets $S_{1},\dots,S_{k}$,
in which case it is unique. 

If $\cal C$ is a symmetric monoidal relative category, we let
$\Fact_{n}\pr{\cal C}$ denote the full subcategory of $\Alg_{\Fact_{n}}\pr{\cal C}$
spanned by the $\Fact_{n}$-algebras $A$ with the following
property: For each multi-morphism $\pr{S_{1},\dots,S_{n}}\to T$,
the map
\[
A\pr{S_{1}}\otimes\cdots\otimes A\pr{S_{n}}\to A\pr T
\]
is a weak equivalence. We will regard $\Fact_{n}\pr{\cal C}$
as a relative category whose weak equivalences are the maps $X\to Y$
such that, for each $S\in\Fact_{n}$, the map $X\pr S\to Y\pr S$
is a weak equivalence.

Every map $f:\inp n\to\inp m$ of pointed sets induces a map
$\Fact_{m}\to\Fact_{n}$ of colored operads, given on objects
by $S\mapsto f^{-1}\pr S$. Pulling back along this map, we obtain
a relative functor $\Fact_{n}\pr{\cal C}\to\Fact_{m}\pr{\cal C}$.
This makes the collection $\{\Fact_{n}\pr{\cal C}\}_{\inp n\in\Fin_{\ast}}$
into a functor $\Fin_{\ast}\to\sf{RelCat}$, thereby giving rise
to a functor
\[
\Fact:\sf{SMRelCat}\to\Fun\pr{\Fin_{\ast},\sf{RelCat}}.
\]
\end{construction}

\begin{rem}
In the monoidal case, we replace $\Fact_{n}$ by the non-symmetric
colored operad whose colors are the subsets $S\subset\underline{n}$
that are \textit{convex} in the following sense: If $i,j\in S$
and $i<j$, then $\{k\in\underline{n}\mid i\leq k\leq j\}\subset S$.
Multiarrows are defined as in the symmetric monoidal case.
\end{rem}

\begin{prop}
\label{prop:Seg}Let $\cal C$ be a symmetric monoidal relative
category. 
\begin{enumerate}
\item For each $n\geq0$, the forgetful functor
\[
\Phi=\Phi_{n}:\Fact_{n}\pr{\cal C}\to\cal C^{n},\,X\mapsto\pr{X\pr{\{i\}}}_{1\leq i\leq n},
\]
is a homotopy equivalence of relative categories.
\item The functors of (1) are part of a lax natural transformation
$\Fact\pr{\cal C}\To\cal C^{\bullet}$ of pseudofunctors $\Fin_{\ast}\to\sf{RelCat}$,
which is natural in $\cal C\in\sf{SMRelCat}$. 
\item The functor $\Fact\pr{\cal C}:\Fin_{\ast}\to\sf{RelCat}$ satisfies
the Segal condition.
\end{enumerate}
\end{prop}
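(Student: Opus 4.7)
\bigskip

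\noindent\textbf{Proof plan.} My strategy is to construct an explicit section $\Psi_{n}:\cal C^{n}\to\Fact_{n}\pr{\cal C}$ of $\Phi_{n}$ and deduce all three parts from its properties. For part (1), I would define $\Psi_{n}\pr{C_{1},\dots,C_{n}}$ to be the $\Fact_{n}$-algebra that assigns to each $S\subset\u n$ the tensor product $\bigotimes_{i\in S}C_{i}$ (formed using the ordering inherited from $\u n$), with multi-arrow structure maps built from iterated associators, unitors, and symmetries of $\cal C$; MacLane's coherence theorem makes these maps unambiguous and automatically places $\Psi_{n}\pr{C_{1},\dots,C_{n}}$ in $\Fact_{n}\pr{\cal C}$, since the structure maps are isomorphisms. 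The action of $\Psi_{n}$ on morphisms is component-wise tensoring, which is evidently a relative functor. Then $\Phi_{n}\Psi_{n}=\id_{\cal C^{n}}$ on the nose, while a natural transformation $\eta:\Psi_{n}\Phi_{n}\To\id_{\Fact_{n}\pr{\cal C}}$ is obtained by taking, for each $X$ and each $S$, the algebra structure map $\bigotimes_{i\in S}X\pr{\{i\}}\to X\pr S$ associated to the multi-arrow $\pr{\{i\}}_{i\in S}\to S$; this is a natural weak equivalence precisely because the defining condition of $\Fact_{n}\pr{\cal C}$ says that all such structure maps are weak equivalences.

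For part (2), for each $f:\inp n\to\inp m$ I would take the $2$-cell $\alpha_{f}:\cal C^{\bullet}\pr f\circ\Phi_{n}\To\Phi_{m}\circ\Fact\pr{\cal C}\pr f$ whose $j$-th component at $X\in\Fact_{n}\pr{\cal C}$ is the structure map $\bigotimes_{i\in f^{-1}\pr{\{j\}}}X\pr{\{i\}}\to X\pr{f^{-1}\pr{\{j\}}}$ coming from the multi-arrow $\pr{\{i\}}_{i\in f^{-1}\pr{\{j\}}}\to f^{-1}\pr{\{j\}}$. Naturality in $X$ is immediate from the definition of a morphism of $\Fact_{n}$-algebras, and the lax-naturality axiom for a composite $\inp n\xrightarrow{f}\inp m\xrightarrow{g}\inp k$ reduces component-wise to the associativity of the multi-composition in $X$; naturality in $\cal C\in\sf{SMRelCat}$ is a direct unwinding of definitions.

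Part (3) is then a formal consequence of (1). Unwinding the construction, the inert maps $\{\rho^{i}\}_{1\leq i\leq n}$ induce a functor $\Fact_{n}\pr{\cal C}\to\prod_{i=1}^{n}\Fact_{1}\pr{\cal C}$, and the square with vertical maps $\Phi_{n}$ and $\prod_{i}\Phi_{1}$ and bottom edge $\id_{\cal C^{n}}$ commutes strictly. Part (1) gives that both vertical maps are DK-equivalences, so $2$-out-of-$3$ forces the top edge to be a DK-equivalence, which is exactly the Segal condition.

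The main obstacle I anticipate is the verification of the lax-naturality axioms in (2): the pseudofunctoriality constraints of $\cal C^{\bullet}$ (built from associators of the tensor product) must be matched carefully against the associativity of the $\Fact_{n}$-algebra structure on $X$. Since both are ultimately governed by coherence for symmetric monoidal categories, this should amount to careful bookkeeping rather than any genuine difficulty.
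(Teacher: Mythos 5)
Your proposal is correct and takes essentially the same approach as the paper: the same section $\Psi_n$ with $\Phi_n\Psi_n=\id$ on the nose and the counit $\Psi_n\Phi_n\Rightarrow\id$ given by the algebra structure maps for (1), the same lax $2$-cells built from the multi-arrows $\pr{\{i\}}_{i\in f^{-1}\pr j}\to f^{-1}\pr j$ for (2), and the Segal condition deduced from the strictly commuting square over the inert maps $\rho^i$ (the paper compresses this to "follows from (2)", but the underlying argument is the same $2$-out-of-$3$ you spell out).
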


\begin{proof}
For part (1), define a functor $\Psi:\cal C^{n}\to\Fact_{n}\pr{\cal C}$
by mapping each object $\pr{X_{1},\dots,X_{n}}\in\cal C^{n}$
to the $\Fact_{n}$-algebra $S\mapsto\bigotimes_{s\in S}X_{i}$,
with structure maps $\bigotimes_{1\leq i\leq n}\bigotimes_{s\in S_{i}}X_{s}\to\bigotimes_{s\in S_{1}\cup\dots\cup S_{n}}X_{s}$
provided by the coherence isomorphisms of $\cal C$. We then
have $\Phi\circ\Psi=\id_{\cal C^{n}}$. Also, for each $A\in\Fact_{n}\pr{\cal C}$,
the maps
\[
\{\bigotimes_{s\in S}A\pr{\{s\}}\to A\pr S\}_{S\subset\{1,\dots,n\}}
\]
determine a weak equivalence of $\Fact_{n}$-algebras $\Psi\circ\Phi\pr A\xrightarrow{\simeq}A$.
This weak equivalence is natural in $A$, so we have shown that
$\Phi$ and $\Psi$ are homotopy inverses of each other, proving
(1).

For part (2), let $u:\inp n\to\inp m$ be a morphism of $\Fin_{\ast}$.
There is a natural transformation depicted as 
\[\begin{tikzcd}
	{\operatorname{Fact}_n(\mathcal{C})} & {\mathcal{C}^n} \\
	{\operatorname{Fact}_m(\mathcal{C})} & {\mathcal{C}^m}
	\arrow[""{name=0, anchor=center, inner sep=0}, "{\Phi_n}", from=1-1, to=1-2]
	\arrow["{\operatorname{Fact}(\mathcal{C})(u)}"', from=1-1, to=2-1]
	\arrow["{\mathcal{C}^\bullet (u)}", from=1-2, to=2-2]
	\arrow[""{name=1, anchor=center, inner sep=0}, "{\Phi_m}"', from=2-1, to=2-2]
	\arrow[shorten <=4pt, shorten >=4pt, Rightarrow, from=0, to=1]
\end{tikzcd}\]whose component at an object $A\in\Fact_{n}\pr{\cal C}$ is provided
by the maps 
\[
\{\bigotimes_{i\in u^{-1}\pr j}A\pr{\{i\}}\xrightarrow{\simeq}A\pr{u^{-1}\pr j}\}_{1\leq j\leq m}.
\]
These natural weak equivalences determine a lax natural transformation
$\Fact\pr{\cal C}\To\cal C^{\bullet}$, and this lax natural
transformation is natural in $\cal C$.

For part (3), we must show that for each $n\geq0$, the map
\[
\Fact_{n}\pr{\cal C}\to\prod_{1\leq i\leq n}\Fact_{1}\pr{\cal C}
\]
is a DK-equivalence. Since the natural transformation appearing
in (2) associated to each morphism of $\Fin_{\ast}$ is a natural
weak equivalence, by part (1) we are reduced to showing that
the map $\cal C^{n}\to\prod_{1\leq i\leq n}\cal C$ is a DK-equivalence,
which is clear. The proof is now complete.
\end{proof}

\subsection{\label{subsec:perm}The Functor \texorpdfstring{$\Perm$}{Perm}}

We next construct a functor 
\[
\Perm:\Fun\pr{\Fin_{\ast},\sf{RelCat}}\to\sf{PermRelCat}
\]
whose restriction will be a homotopy inverse to the functor $\Fact$
of Construction \ref{const:Fact}. The functor $\Perm$ is a
generalization of the \textit{inverse $K$-theory functor }of
Mandell \cite{Man10} to the setting of relative categories.

Before we describe the construction, let us start with a motivation.
We are generally interested in whether a functor $F:\Fin_{\ast}\to\cal X$
satisfies the Segal condition, where $\cal X$ is a category
equipped with finite products and an appropriate notion of ``weak
equivalences''. Recall that this condition says that, for each
active map $u:\inp n\epi\inp m$ in $\Fin_{\ast}$, the map
\[
F\inp n\to\prod_{i=1}^{m}F\inp{n_{i}}
\]
is a weak equivalence, where for each $1\leq i\leq m$ we factored
$\rho^{i}f$ as a strongly inert map $\inp n\to\inp{n_{i}}$
followed by an active map $\inp{n_{i}}\to\inp 1$. The product
on the right-hand side is covariantly functorial in $\inp n$
\textit{and} contravariantly functorial in $\inp m$. Such functoriality
can be encoded as ordinary functoriality, using the \textit{twisted
arrow categor}y:
\begin{recollection}
\label{rec:twist}Let $\cal C$ be a category. The (right) \textbf{twisted
arrow category} $\Tw\pr{\cal C}$ is the (contravariant) Grothendieck
construction of the hom-functor $\cal C^{\op}\times\cal C\to\sf{Cat}$.
So its objects are the morphisms of $\cal C$, and a morphism
$\pr{f:X\to Y}\to\pr{g:Z\to W}$ in $\Tw\pr{\cal C}$ is a pair
$\pr{u,v}$ of morphisms in $\cal C$ rendering the diagram 
\[\begin{tikzcd}
	X & Z \\
	Y & W
	\arrow["u", from=1-1, to=1-2]
	\arrow["f"', from=1-1, to=2-1]
	\arrow["g", from=1-2, to=2-2]
	\arrow["v", from=2-2, to=2-1]
\end{tikzcd}\]commutative.
\end{recollection}

\begin{construction}
\label{const:Tw}We define a functor
\[
\pr -^{\Tw}:\Fun\pr{\Fin_{\ast},\sf{RelCat}}\to\Fun\pr{\Tw\pr{\Fin_{\ast}^{\act}},\sf{RelCat}}
\]
as follows. Given a functor $F:\Fin_{\ast}\to\sf{RelCat}$, the
functor $F^{\Tw}:\Tw\pr{\Fin_{\ast}^{\act}}\to\sf{RelCat}$ is
defined on objects by
\[
F^{\Tw}\pr{\inp n\epi\inp m}=\prod_{i=1}^{m}F\inp{n_{i}},
\]
where for each $1\leq i\leq m$, we factored $\rho^{i}f$ as
a strongly inert map $\inp n\to\inp{n_{i}}$ followed by an active
map $\inp{n_{i}}\to\inp 1$. To describe the action of $F^{\Tw}$
on morphisms, suppose we are given a morphism $\pr{u,v}:\pr{f:\inp n\to\inp m}\to\pr{g:\inp k\to\inp l}$
in $\Tw\pr{\Fin_{\ast}^{\act}}$. The map $F^{\Tw}\pr{u,v}\from\prod_{i=1}^{m}F\inp{n_{i}}\to\prod_{j=1}^{l}F\inp{k_{j}}$
is induced by the composites
\[
\inp{n_{v\pr j}}\rightsquigarrow\inp n\xrightarrow{u}\inp k\mono\inp{k_{j}}.
\]
Here the first map is induced by the inclusion $f^{-1}\pr{v(j)}\hookrightarrow\{1,\dots,n\}$
and the identification of $\underline{n_{v\pr j}}$ with $f^{-1}\pr{v\pr j}$
by an order-preserving map. (In the monoidal case, we use the
category $\nabla$ introduced in Remark \ref{rem:nabla} instead
of $\Fin_{\ast}$ and replace strongly inert maps by inert maps.)

\end{construction}

The category $\Fin_{\ast}^{\act}$ has a symmetric monoidal structure
whose monoidal product is given by the coproduct $\vee$ of based
sets. (Likewise, the category $\nabla$ has a monoidal structure
whose monoidal product ``deletes the maximum of the left factor
and the minimum of the right factor.'') The functor $F^{\Tw}$
of Construction \ref{const:Tw} is symmetric monoidal for this
symmetric monoidal structure and the cartesian monoidal structure
on $\mathsf{RelCat}$, so its Grothendieck construction inherits
a symmetric monoidal structure \cite[$\S$A.1]{moeller_vasilakopoulou_mongroth}.
This gives rise to the following construction:

\begin{construction}
\label{const:Perm}We define a functor 
\[
\opn{Perm}:\Fun\pr{\Fin_{\ast},\sf{RelCat}}\to\sf{PermRelCat}
\]
as follows: Given a functor $F:\Fin_{\ast}\to\sf{RelCat}$, the
underlying relative category $\opn{Perm}\pr F$ is the relative
Grothendieck construction (Example \ref{exa:rel_gr}) of the
functor $F^{\Tw}:\pr{\Tw\pr{\Fin_{\ast}^{\act}}^{\op}}^{\op}\to\sf{RelCat}$.
So a typical object of $\opn{Perm}\pr F$ has the form $\pr{u:\inp n\epi\inp m,X_{1},\dots,X_{m}}$,
where $u$ is an active map of $\Fin_{\ast}$ and each $X_{i}$
is an object of $F\inp{n_{i}}$. The symmetric monoidal structure
is provided by the tensor product
\begin{align*}
 & \pr{u:\inp n\epi\inp m,X_{1},\dots,X_{m}}\otimes\pr{v:\inp k\epi\inp l,Y_{1},\dots,Y_{l}}\\
= & \pr{w:\inp{n+k}\to\inp{m+l},X_{1},\dots,X_{m},Y_{1},\dots,Y_{l}},
\end{align*}
where $w\pr i=u\pr i$ for $1\le i\leq n$ and $w\pr{n+i}=m+v\pr i$
for $1\leq i\leq k$. The unit object is $\pr{\id_{\inp 0},\ast}$,
and the braidings are given by the bijection $\inp n\vee\inp m\cong\inp m\vee\inp n$
that interchanges the summand, together with the identity morphisms
in $F\inp m$ and $F\inp n$. 
\end{construction}

The following proposition identifies the localization of $\Perm\pr F$:
\begin{prop}
\label{prop:Perm_und}For every functor $F:\Fin_{\ast}\to\sf{RelCat}$
satisfying the Segal condition, the inclusion $\{\inp 1\epi\inp 1\}\hookrightarrow\Tw\pr{\Fin_{\ast}}$
induces a DK-equivalence
\[
F\inp 1\xrightarrow{\simeq}\Perm\pr F.
\]
\end{prop}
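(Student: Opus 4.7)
The plan is to exploit the structure of $\Perm(F)$ as a relative Grothendieck construction. By Construction~\ref{const:Perm} together with Example~\ref{exa:rel_gr}, the forgetful functor $\pi\colon \Perm(F)\to\Tw(\Fin_{\ast}^{\act})$, where the base is equipped with its trivial relative structure, is a relative cocartesian fibration whose fiber over $u\colon \inp n\epi\inp m$ is the relative category $F^{\Tw}(u)=\prod_{i=1}^{m} F\inp{n_{i}}$; the map in question, $F\inp 1\hookrightarrow\Perm(F)$, is the inclusion of the fiber over $\id_{\inp 1}$. By the machinery of the appendix on relative cartesian fibrations, the localization $L\Perm(F)$ is the $\infty$-categorical colimit of the straightening $LF^{\Tw}\colon \Tw(\Fin_{\ast}^{\act})\to\Cat_{\infty}$; thus it suffices to show that the canonical cone map
\[
L(F\inp 1)=LF^{\Tw}(\id_{\inp 1})\longrightarrow\colim_{\Tw(\Fin_{\ast}^{\act})} LF^{\Tw}
\]
is an equivalence of $\infty$-categories.

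The Segal hypothesis on $F$ then gives $LF^{\Tw}(u\colon\inp n\epi\inp m)\simeq L(F\inp 1)^{n}$, and the transition functors along morphisms in $\Tw(\Fin_{\ast}^{\act})$ are built from the insertion, permutation, and tensor operations coming from the symmetric monoidal structure that Segal puts on $L(F\inp 1)$; this identification is essentially parts (2) and (3) of Proposition~\ref{prop:Seg} applied to the target.

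The main obstacle is the colimit computation itself. My approach is to factor the colimit through the source projection $s\colon \Tw(\Fin_{\ast}^{\act})\to\Fin_{\ast}^{\act}$: each coslice fiber $\inp n\downarrow\Fin_{\ast}^{\act}$ of $s$ admits $\id_{\inp n}$ as an initial object and is therefore contractible, so $s$ is cofinal in a suitable sense (which is a routine check once one verifies compatibility of $LF^{\Tw}$ with this cofinality). This reduces the task to identifying the colimit, over $\Fin_{\ast}^{\act}$, of the diagram $G\colon \inp n\mapsto L(F\inp 1)^{n}$ whose functoriality along active maps encodes precisely the symmetric monoidal structure on $L(F\inp 1)$ furnished by $F$. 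The final step identifies this colimit with $L(F\inp 1)$ via the cocone of tensor maps $G(\inp n)=L(F\inp 1)^{n}\to L(F\inp 1)$: this is a categorified form of Mandell's inverse $K$-theory computation~\cite{Man10} and expresses the universal property of $L(F\inp 1)$ as the underlying $\infty$-category of the symmetric monoidal structure determined by $F$.
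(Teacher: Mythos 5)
Your overall strategy --- view $L\Perm(F)$ as a colimit of the fiberwise localization of $F^{\Tw}$ over $\Tw(\Fin_{\ast}^{\act})$, reduce along the source projection $s$, and then contract $\Fin_{\ast}^{\act}$ onto $\inp 1$ --- is indeed the shape of the paper's argument, which packages these reductions via Lemma~\ref{lem:Tw} and Corollary~\ref{cor:rel_cart_L}. But two of your steps are not actually proved, and the first of them contains a genuine error.

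First, the cofinality of $s$. You write that each fiber of $s$ over $\inp n$ is the coslice $\inp n \downarrow \Fin_{\ast}^{\act}$ with $\id_{\inp n}$ initial, hence contractible, ``so $s$ is cofinal.'' This does not follow. Cofinality (in either the initial or the final sense) is a statement about the comma categories $\Tw(\Fin_{\ast}^{\act})\times_{\Fin_{\ast}^{\act}}(\Fin_{\ast}^{\act})_{/\inp n}$ or $\Tw(\Fin_{\ast}^{\act})\times_{\Fin_{\ast}^{\act}}(\Fin_{\ast}^{\act})_{\inp n/}$, not about the fibers of $s$; these comma categories admit neither an initial nor a terminal object, so their contractibility is a nontrivial fact. (Also a small point: the fiber $s^{-1}\inp n$ is $(\inp n/\Fin_{\ast}^{\act})^{\op}$, so $\id_{\inp n}$ is terminal, not initial, in it.) The paper's Lemma~\ref{lem:Tw} proves the needed statement honestly, via Thomason's homotopy colimit theorem. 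If you want to argue through contractibility of fibers you would need to invoke that $s$ is a (co)cartesian fibration and deduce smoothness/properness, but you neither state this nor check it, and the hedge ``which is a routine check once one verifies compatibility of $LF^{\Tw}$ with this cofinality'' is not a substitute.

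Second, the final identification $\colim_{\Fin_{\ast}^{\act}} G \simeq L(F\inp 1)$ is waved off as ``a categorified form of Mandell's inverse $K$-theory computation.'' This phrase hides the actual reason, which is elementary: $\inp 1$ is a terminal object of $\Fin_{\ast}^{\act}$ (every object has a unique active map to $\inp 1$), so the inclusion $\{\inp 1\}\hookrightarrow\Fin_{\ast}^{\act}$ is cofinal and the colimit is the value at $\inp 1$. The paper uses exactly this, again via Corollary~\ref{cor:rel_cart_L}. You should state this directly rather than gesture at a universal property that is not established in your argument.

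Finally, a remark on economy: your intermediate replacement of $LF^{\Tw}$ by $\inp n\mapsto L(F\inp 1)^{n}$ (invoking the monoidal structure that Segal conditions put on $L(F\inp 1)$) is unnecessary and introduces compatibility questions you do not resolve. The paper replaces $F^{\Tw}$ only by $F\circ U$, using the Segal condition once to produce a natural DK-equivalence $F\circ U\To F^{\Tw}$; no appeal to the induced monoidal structure on $L(F\inp 1)$ is needed. This shortcut both simplifies the argument and removes the burden of checking that the monoidal transition maps agree with the ones coming from $F^{\Tw}$.
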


The proof of Proposition \ref{prop:Perm_und} relies on a lemma:
\begin{lem}
\label{lem:Tw}For any category $\cal C$, the forgetful functor
\[
\Tw\pr{\cal C}\to\cal C
\]
is (homotopy) initial.
\end{lem}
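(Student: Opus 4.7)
The plan is to realize the forgetful source projection $s:\Tw(\cal C)\to\cal C$, sending $(f:X\to Y)$ to $X$, as a Cartesian fibration with weakly contractible fibers, and then conclude via the standard principle that such a fibration is homotopy initial.

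First, I verify the Cartesian fibration structure. Given a morphism $h:c\to c'$ in $\cal C$ and an object $(g:c'\to W)$ of the fiber $s^{-1}(c')$, the morphism
\[
(h,\id_W):(g\circ h:c\to W)\to(g:c'\to W)
\]
in $\Tw(\cal C)$ is a Cartesian lift of $h$ at $g$. Universality reduces to observing that any morphism $(\alpha,\beta):(f'')\to(g)$ in $\Tw(\cal C)$ whose image $\alpha$ under $s$ factors as $\alpha=h\circ k$ admits the unique factorization $(\alpha,\beta)=(h,\id_W)\circ(k,\beta)$, which is immediate from the definition of composition in $\Tw(\cal C)$.

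Next, I identify the fibers. The fiber $s^{-1}(c)$ has objects $f:c\to Y$ and morphisms $(\id_c,\beta):(f)\to(f')$ with $\beta\circ f'=f$, so it is canonically isomorphic to the opposite coslice $(\cal C_{c/})^{\op}$. Since $\cal C_{c/}$ has the initial object $(c,\id_c)$, its opposite has a terminal object, and hence each fiber of $s$ is weakly contractible.

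Finally, I invoke the standard principle that for a Cartesian fibration $p:X\to S$ the Cartesian pullbacks exhibit the comma category $c\downarrow p$ as equivalent to the fiber $p^{-1}(c)$ (applied, if desired, via Quillen's Theorem~B to the induced Cartesian fibration $c\downarrow p\to S_{c/}$ whose fibers are again those of $p$). Weakly contractible fibers of $p$ thus yield weakly contractible comma categories, and $s$ is homotopy initial by Quillen's Theorem~A. I expect the main obstacle to be essentially bookkeeping---fixing the convention for ``initial'' and verifying the fiber-versus-comma identification on nerves---rather than substantive mathematical work; the Cartesian fibration structure of $s$ and the contractibility of its fibers are both elementary.
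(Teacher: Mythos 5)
Your Cartesian fibration structure and the identification $s^{-1}(c)\cong(\cal C_{c/})^{\op}$ are both correct, but the last paragraph has a genuine variance gap. For the paper's notion of \emph{initial}, what must be contractible is the comma category $s\downarrow c=\Tw(\cal C)\times_{\cal C}\cal C_{/c}$, whose objects are twisted arrows together with a morphism $s(-)\to c$. What you actually argue for is $c\downarrow s=\Tw(\cal C)\times_{\cal C}\cal C_{c/}$: the retraction via Cartesian pullbacks of $\alpha:c\to s(x)$ lands in $s^{-1}(c)$ and exhibits $c\downarrow s\simeq s^{-1}(c)$, which proves that $s$ is \emph{final} (right cofinal). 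Finality does not imply initiality in general (for instance the inclusion $\{1\}\hookrightarrow[1]$ is final but not initial), and the direct retraction you describe cannot be applied to $s\downarrow c$, because for $\alpha:s(x)\to c$ a Cartesian fibration only provides transport $s^{-1}(c)\to s^{-1}(s(x))$, not a pushforward into $s^{-1}(c)$. The fix is close at hand and still uses your Cartesian fibration observation: the pullback $\Tw(\cal C)\times_{\cal C}\cal C_{/c}\to\cal C_{/c}$ is again a Cartesian fibration whose fibers are fibers of $s$, hence weakly contractible, so this projection is a weak equivalence by Quillen's Theorem A; since $\cal C_{/c}$ has a terminal object $\id_c$, the total category is contractible. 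With this repair your route is a genuine variant of the paper's proof, which instead identifies $\Tw(\cal C)\times_{\cal C}\cal C_{/c}$ directly with the Grothendieck construction of a coslice-valued functor on $(\cal C_{/c})^{\op}$ and invokes Thomason's homotopy colimit theorem; your approach trades that bookkeeping for the neat observation that $s$ is a Cartesian fibration, but only after the comma category is taken in the correct direction.
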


\begin{proof}
We must show that, for each $C\in\cal C$, the category $\Tw\pr{\cal C}\times_{\cal C}\cal C_{/C}$
has contractible classifying space. Unwinding the definitions,
we can identify this category with the Grothendieck construction
of the composite
\[
\pr{\cal C_{/C}}^{\op}\to\cal C^{\op}\xrightarrow{\pr{\cal C_{\bullet/}}^{\op}}\sf{Cat}.
\]
Thus, by Thomason's homotopy colimit theorem \cite[Theorem 1.2]{Tho79},
there is a homotopy equivalence
\[
B\pr{\Tw\pr{\cal C}\times_{\cal C}\cal C_{/C}}\simeq\hocolim_{X\in\pr{\cal C_{/C}}^{\op}}B\pr{\cal C_{X/}}^{\op}.
\]
The right-hand side is contractible, since the classifying spaces
of the slice categories of $\cal C$ are contractible. The proof
is now complete.
\end{proof}
\begin{rem}
In the situation of Lemma \ref{lem:Tw}, Corollary \ref{cor:rel_cart}
shows more strongly that the projection $\Tw\pr{\cal C}\to\cal C$
is a localization.
\end{rem}

\begin{proof}
[Proof of Proposition \ref{prop:Perm_und}]Let $U:\Tw\pr{\Fin_{\ast}^{\act}}\to\Fin_{\ast}^{\act}$
denote the forgetful functor. There is a natural transformation
$F\circ U\To F^{\Tw}$, which is a natural DK-equivalence because
$F$ satisfies the Segal condition. Therefore, it suffices to
show that the relative functor
\[
F\inp 1\to\int_{\Tw\pr{\Fin_{\ast}^{\act}}^{\op}}F\circ U
\]
is a DK-equivalence. This is clear, because the functors $\int_{\Tw\pr{\Fin_{\ast}^{\act}}^{\op}}F\circ U\to\int_{\pr{\Fin_{\ast}^{\act}}^{\op}}F$
and $F\inp 1\to\int_{\pr{\Fin_{\ast}^{\act}}^{\op}}F$ are DK-equivalences
by Lemma \ref{lem:Tw} and Corollary \ref{cor:rel_cart_L}. The
proof is now complete.
\end{proof}

\subsection{\label{subsec:proof_Man10}Proof of Theorem \ref{thm:Man10}}

So far, we have constructed a pair of functors 
\[
\Fact:\sf{PermRelCat}\rl\Fun^{\mathrm{Seg}}\pr{\Fin_{\ast},\sf{RelCat}}:\Perm,
\]
which are relative functors by Propositions \ref{prop:Seg} and
Proposition \ref{prop:Perm_und}. Theorem \ref{thm:Man10} says
that the composites $\Fact\circ\Perm$ and $\Perm\circ\Fact$
are connected by zig-zags of natural weak equivalences, and this
is what we are going to prove in the remainder of this section
(Proposition \ref{prop:PermFact} and \ref{prop:FactPerm}).

\subsubsection{\label{subsubsec:permfact}Comparing $\protect\Perm\circ\protect\Fact\protect\pr{\protect\cal C}$
with $\protect\cal C$}
\begin{construction}
\label{const:Perm_Gamma}We construct a natural transformation
$\Perm\circ\Fact\To\id_{\sf{PermRelCat}}$ of endofunctors on
$\sf{PermRelCat}$. Let $\cal C$ be a permutative category.
We wish to define a functor
\[
\Perm\circ\Fact\pr{\cal C}\to\cal C.
\]
Using the universal property of the Grothendieck construction
as a lax colimit \cite[Theorem 10.2.3]{JY21}, it will suffice
to construct an oplax natural transformation from the functor
$\Fact\pr{\cal C}^{\Tw}:\Tw\pr{\Fin_{\ast}^{\act}}\to\sf{Cat}$
to the constant functor at $\cal C$, and this is what we will
do.

For each object $\inp n\epi\inp m\in\Tw\pr{\Fin_{\ast}^{\act}}$,
we define a functor
\[
\Fact^{\Tw}\pr{\inp n\epi\inp m}=\prod_{1\leq i\leq m}\Fact_{n_{i}}\pr{\cal C}\to\cal C
\]
by $\pr{A^{i}}_{1\leq i\leq m}\mapsto\bigotimes_{i=1}^{m}A^{i}\pr{\u{n_{i}}}$.
Next, for each morphism $\pr{u,v}:\pr{\inp n\epi\inp m}\to\pr{\inp k\epi\inp l}$
in $\Tw\pr{\Fin_{\ast}^{\act}}$, there is a natural transformation
\[\begin{tikzcd}
	{\prod_{1\leq i\leq m }\operatorname{Fact}_{n_i}(\mathcal{C})} \\
	& {\mathcal{C}} \\
	{\prod_{1\leq j\leq l }\operatorname{Fact}_{k_j}(\mathcal{C})}
	\arrow[""{name=0, anchor=center, inner sep=0}, from=1-1, to=2-2]
	\arrow[from=1-1, to=3-1]
	\arrow[""{name=1, anchor=center, inner sep=0}, from=3-1, to=2-2]
	\arrow[shorten <=4pt, shorten >=4pt, Rightarrow, from=1, to=0]
\end{tikzcd}\]whose component at $\pr{A^{i}}_{1\leq i\leq m}\in\prod_{1\leq i\leq m}\Fact_{n_{i}}\pr{\cal C}$
is given by 
\[
\bigotimes_{j=1}^{l}A^{v\pr j}\pr{u_{j}^{-1}\pr{\underline{k_{j}}}}\cong\bigotimes_{i=1}^{m}\bigotimes_{j\in v^{-1}\pr i}A^{i}\pr{u_{j}^{-1}\pr{\underline{k_{j}}}}\to\bigotimes_{i=1}^{m}A^{i}\pr{\u{n_{i}}},
\]
where $u_{j}:\inp{n_{v\pr i}}\to\inp{k_{j}}$ is defined as in
Construction \ref{const:Tw}. These functors and natural transformations
determine an oplax natural transformation $\Fact\pr{\cal C}^{\Tw}\To\cal C$
and hence a functor $\Perm\circ\Fact\pr{\cal C}\to\cal C$. By
construction, this functor is natural in $\cal C\in\sf{PermRelCat}$
and is strictly symmetric monoidal.\footnote{We can define the functor $\Perm\circ\Fact\pr{\cal C}\to\cal C$
even when $\cal C$ is merely a symmetric monoidal category,
but this functor will not be natural on the nose for symmetric
monoidal functors.}
\end{construction}

\begin{prop}
\label{prop:PermFact}For every permutative category $\cal C$,
the functor
\[
\Perm\circ\Fact\pr{\cal C}\to\cal C
\]
of Construction \ref{const:Perm_Gamma} is a DK-equivalence.
\end{prop}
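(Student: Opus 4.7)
The plan is to deduce Proposition \ref{prop:PermFact} directly from Proposition \ref{prop:Perm_und} together with Proposition \ref{prop:Seg}(1), via the two-out-of-three property of DK-equivalences, without any further direct analysis of $\Perm\circ\Fact(\cal C)$.

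First, by Proposition \ref{prop:Seg}(3) the functor $F:=\Fact(\cal C):\Fin_{\ast}\to\sf{RelCat}$ satisfies the Segal condition, so applying Proposition \ref{prop:Perm_und} to $F$ yields a DK-equivalence
\[
\iota:\Fact_1(\cal C)=F\langle 1\rangle\xrightarrow{\simeq}\Perm(\Fact(\cal C)),
\]
which on objects sends an algebra $A$ to the pair $(\id_{\langle 1\rangle},A)$. On the other hand, Proposition \ref{prop:Seg}(1) gives that the forgetful functor $\Phi_1:\Fact_1(\cal C)\to\cal C$, $A\mapsto A(\{1\})$, is a homotopy equivalence of relative categories, and hence a DK-equivalence.

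Next, I would unwind Construction \ref{const:Perm_Gamma} to verify that the composite
\[
\Fact_1(\cal C)\xrightarrow{\iota}\Perm(\Fact(\cal C))\to\cal C
\]
coincides on the nose with $\Phi_1$. Indeed, for $A\in\Fact_1(\cal C)$, the object $(\id_{\langle 1\rangle},A)$ is sent by the counit, specialized to the case $n=m=1$ (so that $n_1=1$ and the indexing product is trivial), to $A(\underline{1})=A(\{1\})=\Phi_1(A)$; the analogous verification on morphisms is immediate, since at the basepoint $\id_{\langle 1\rangle}\in\Tw(\Fin_{\ast}^{\act})$ the oplax structure maps reduce to the identity. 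Since $\iota$ and $\Phi_1$ are both DK-equivalences, the two-out-of-three property then forces $\Perm\circ\Fact(\cal C)\to\cal C$ to be a DK-equivalence as well.

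The only substantive obstacle is this last identification of the composite with $\Phi_1$, which amounts to matching the section of the Grothendieck construction used in Proposition \ref{prop:Perm_und} against the oplax natural transformation $\Fact(\cal C)^{\Tw}\To\cal C$ defining the counit in Construction \ref{const:Perm_Gamma}. Both descriptions are essentially definitional, and at the distinguished object $\id_{\langle 1\rangle}$ they collapse to the evaluation $A\mapsto A(\{1\})$, so no real difficulty arises.
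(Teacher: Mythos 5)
Your proposal is correct and follows essentially the same route as the paper: factor the map through $\Fact_1(\cal C)\to\Perm\circ\Fact(\cal C)\to\cal C$, identify the composite with the forgetful functor $\Phi_1$ of Proposition \ref{prop:Seg}(1), note that the first map is a DK-equivalence by Proposition \ref{prop:Perm_und}, and conclude by two-out-of-three. The only difference is that you spell out the verification that the composite equals $\Phi_1$, which the paper leaves implicit in the phrase ``By Proposition \ref{prop:Seg}, the composite $\psi\circ\phi$ is a DK-equivalence.''
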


\begin{proof}
Consider the relative functors
\[
\Fact_{1}\pr{\cal C}\xrightarrow{\phi}\Perm\circ\Fact\pr{\cal C}\xrightarrow{\psi}\cal C,
\]
where $\phi$ is induced by the inclusion $\{\inp 1\epi\inp 1\}\hookrightarrow\Tw\pr{\Fin_{\ast}^{\act}}$
and $\psi$ is the functor in question. By Proposition \ref{prop:Seg},
the composite $\psi\circ\phi$ is a DK-equivalence. Also, the
map $\phi$ is a DK-equivalence by Proposition \ref{prop:Perm_und}.
Hence $\psi$ is a DK-equivalence, too.
\end{proof}

\subsubsection{Comparing $\protect\Fact\circ\protect\Perm\protect\pr F$ with
$F$}

We now compare the functor $\Fact\circ\Perm:\opn{Fun}\pr{\Fin_{\ast},\sf{RelCat}}\to\opn{Fun}\pr{\Fin_{\ast},\sf{RelCat}}$
with the identity functor. There does not seem to be a well-behaved
natural transformation between these on the nose, but as we will
see, there is a canonical comparison map between the two if we
post-compose them with the inclusion
\[
\iota:\opn{Fun}\pr{\Fin_{\ast},\sf{RelCat}}\hookrightarrow\opn{OpLaxFun}\pr{\Fin_{\ast},\sf{RelCat}},
\]
where the right-hand side denotes the category of oplax functors
and oplax natural tarnsformations. There is a general technique
to turn an oplax natural transformation into a zig-zag of ordinary
natural transformations. We use this technique to obtain a comparison
between $\Fact\circ\Perm$ and the identity functor of $\opn{Fun}\pr{\Fin_{\ast},\sf{RelCat}}$.
\begin{construction}
\label{const:FactPerm}We define a natural transformation
\[
\eta:\iota\To\iota\circ\Fact\circ\Perm
\]
of functors $\opn{Fun}\pr{\Fin_{\ast},\sf{RelCat}}\to\opn{OpLaxFun}\pr{\Fin_{\ast},\sf{RelCat}}$
as follows. Let $F:\sf{Fin}_{\ast}\to\sf{RelCat}$ be a functor.
For each $\inp n\in\Fin_{\ast}$, there is a functor
\[
\eta_{F,\inp n}:F\inp n\to\Fact_{n}\pr{\Perm\pr F}
\]
carrying each object $X\in F\inp n$ to the $\Fact_{n}$-algebra
defined by
\[
S\mapsto\pr{\inp{\abs S}\epi\inp 1,F\rho^{S}\pr X}.
\]
If $S_{1},\dots,S_{m}\subset\u n$ are pairwise disjoint subsets
with union $S$, then the induced map 
\[
\pr{\inp{\abs S}\epi\inp m,F\rho^{S_{1}}\pr X,\dots,F\rho^{S_{m}}\pr X}\to\pr{\inp{\abs S}\epi\inp 1,F\rho^{S}\pr X}
\]
corresponds to the commutative diagram
\[\begin{tikzcd}
	{\langle |S|\rangle} & {\langle|S| \rangle } \\
	{\langle 1\rangle} & {\langle m\rangle}
	\arrow[equal, from=1-1, to=1-2]
	\arrow[squiggly, from=1-1, to=2-1]
	\arrow[squiggly, from=1-2, to=2-2]
	\arrow[squiggly, from=2-2, to=2-1]
\end{tikzcd}\]and the identity morphisms of the objects $\{F\rho^{S_{i}}\pr X\}_{1\leq i\leq m}$. 

Next, for each morphism $u:\inp n\to\inp m$ in $\Fin_{\ast}$,
the active maps $\{\inp{\abs{u^{-1}\pr S}}\epi\inp{\abs S}\}_{S\subset\u m}$
and the identity morphisms of $\{F\rho^{u^{-1}\pr S}\pr X\}_{S\subset\u m}$
determine a natural transformation depicted as 
\[\begin{tikzcd}
	{F\langle n\rangle} & {\operatorname{Fact}_n (\operatorname{Perm}(F))} \\
	{F\langle m\rangle} & {\operatorname{Fact}_m (\operatorname{Perm}(F)).}
	\arrow[""{name=0, anchor=center, inner sep=0}, "{\eta_{F,\langle n\rangle}}", from=1-1, to=1-2]
	\arrow["Fu"', from=1-1, to=2-1]
	\arrow["{(\operatorname{Fact}\circ \operatorname{Perm}(F))u}", from=1-2, to=2-2]
	\arrow[""{name=1, anchor=center, inner sep=0}, "{\eta_{F,\langle m\rangle}}"', from=2-1, to=2-2]
	\arrow[shorten <=4pt, shorten >=4pt, Rightarrow, from=1, to=0]
\end{tikzcd}\]These natural transformations determine an oplax natural transformation
$\eta_{F}:F\To\Fact\circ\Perm\pr F$, natural in $F$.
\end{construction}

To turn the oplax natural transformations $F\To\Fact\circ\Perm\pr F$
into a zig-zag of ordinary natural transformations, we use the
following path construction:
\begin{construction}
\label{const:path}Let $f:\cal X\to\cal Y$ be a relative functor
of relative categories. We let $\Path\pr f$ denote the fiber
product
\[
\cal X\times_{\Fun\pr{\{0\},\cal Y}}\Fun^{{\rm weq}}\pr{[1],\cal Y},
\]
where $\Fun^{{\rm weq}}\pr{[1],\cal Y}$ denotes the full subcategory
of $\Fun\pr{[1],\cal Y}$ spanned by the weak equivalences. We
regard $\Path\pr f$ as a relative category whose weak equivalences
are those whose images in $\cal X$ and $\Fun\pr{\{1\},\cal Y}$
are weak equivalences. 
\end{construction}

\begin{rem}
\label{rem:path}In the situation of Construction \ref{const:path},
the projection $\Path\pr f\to\cal X$ is a DK-equivalence, since
it admits a left adjoint $X\mapsto\pr{X,\id_{f\pr X}}$ whose
unit and counit are natural weak equivalences.
\end{rem}

\begin{construction}
\label{const:lax->ord}Let $\cal C$ be an ordinary category,
let $F,G:\cal C\to\sf{RelCat}$ be functors, and let $\alpha:F\To G$
be an oplax natural transformation $\alpha:F\To G$. We define
a functor $\Path\pr{\alpha}:\cal C\to\sf{RelCat}$ as follows:
\begin{itemize}
\item On objects, we have $\opn{Path}\pr{\alpha}\pr C=\Path\pr{\alpha_{C}}$.
\item If $f:C\to D$ is a morphism in $\cal C$, then the functor $\opn{Path}\pr{\alpha_{C}}\to\Path\pr{\alpha_{D}}$
carries an object $\pr{X,u:\alpha_{C}\pr X\to Y}$ to 
\[
\pr{Ff\pr X,\alpha_{D}\circ Ff\pr X\to Gf\circ\alpha_{C}\pr X\xrightarrow{Gf\pr u}Gf\pr Y},
\]
where the map $\alpha_{D}\circ Ff\pr X\to Gf\circ\alpha_{C}\pr X$
is the structure map of the oplax natural transformation $\alpha$.
\item The action of $\Path\pr{\alpha}$ on morphisms is defined so
that the assignment $\pr{X,u:\alpha_{C}\pr X\to Y}\mapsto\pr{X,Y}$
determines a natural transformation $\Path\pr{\alpha}\To F\times G$.
\end{itemize}
Note that the assignment $\alpha\mapsto\pr{\Path\pr{\alpha}\To F\times G}$
determines a functor
\begin{align*}
 & \Fun\pr{[1],\opn{OpLaxFun}\pr{\cal C,\sf{RelCat}}}\times_{\Fun\pr{\{0\}\amalg\{1\},\opn{OpLaxFun}\pr{\cal C,\sf{RelCat}}}}\Fun\pr{\{0\}\amalg\{1\},\Fun\pr{\cal C,\sf{RelCat}}}\\
\to & \Fun\pr{[1],\Fun\pr{\cal C,\sf{RelCat}}}.
\end{align*}
\end{construction}

\begin{prop}
\label{prop:FactPerm}There are natural transformations
\[
\id_{\Fun\pr{\Fin_{\ast},\sf{RelCat}}}\stackrel[\simeq]{\alpha}{\Leftarrow}\Path\pr{\eta_{\bullet}}\stackrel{\beta}{\Rightarrow}\Fact\circ\Perm
\]
of endofunctors $\Fun\pr{\Fin_{\ast},\sf{RelCat}}$, with the
following properties:
\begin{enumerate}
\item For every functor $F:\Fin_{\ast}\to\sf{RelCat}$, the natural
transformation $\alpha_{F}:\Path\pr{\eta_{F}}\To F$ is a natural
DK-equivalence.
\item For every functor $F:\Fin_{\ast}\to\sf{RelCat}$ satisfying the
Segal condition, the natural transformation $\beta_{F}:\Path\pr{\eta_{F}}\To\Gamma\circ\Perm\pr F$
is a natural DK-equivalence.
\end{enumerate}
\end{prop}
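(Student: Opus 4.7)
The plan is to obtain $\alpha$ and $\beta$ by applying Construction \ref{const:lax->ord} to the oplax natural transformation $\eta_\bullet:\iota\To\iota\circ\Fact\circ\Perm$. Since both $F$ and $\Fact\circ\Perm\pr F$ lie in the subcategory of honest (non-oplax) functors, the construction produces a functor $\Path\pr{\eta_F}:\Fin_{\ast}\to\sf{RelCat}$ equipped with a natural transformation $\Path\pr{\eta_F}\To F\times\pr{\Fact\circ\Perm\pr F}$. Projecting onto the two factors defines $\alpha_F$ and $\beta_F$, and the naturality in $F$ built into Construction \ref{const:lax->ord} assembles these into natural transformations of endofunctors of $\Fun\pr{\Fin_{\ast},\sf{RelCat}}$.

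Part (1) follows from Remark \ref{rem:path} applied pointwise: for any relative functor $f:\cal X\to\cal Y$, the projection $\Path\pr f\to\cal X$ is a DK-equivalence, with no hypothesis on $f$ required. Hence $\alpha_F:\Path\pr{\eta_F}\To F$ is a natural DK-equivalence.

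For part (2), I would begin with the observation that the assignment $\pr{X,u:\eta_{F,\inp n}\pr X\to Y}\mapsto u$ defines a natural weak equivalence from $\eta_{F,\inp n}\circ\alpha_{F,\inp n}$ to $\beta_{F,\inp n}$; combined with (1) and 2-out-of-3, this reduces part (2) to showing that $\eta_{F,\inp n}:F\inp n\to\Fact_n\pr{\Perm\pr F}$ is a DK-equivalence for every $n$. Proposition \ref{prop:Seg}(1) tells us that the forgetful functor $\Phi_n:\Fact_n\pr{\Perm\pr F}\to\pr{\Perm\pr F}^n$ is a homotopy equivalence, so by another application of 2-out-of-3 it suffices to show that $\Phi_n\circ\eta_{F,\inp n}$ is a DK-equivalence. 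Unwinding Construction \ref{const:FactPerm}, this composite sends $X\in F\inp n$ to the tuple $\pr{\pr{\inp 1\epi\inp 1,F\rho^i\pr X}}_{1\leq i\leq n}$, and hence factors as
\[
F\inp n\xrightarrow{\pr{F\rho^i}_{1\leq i\leq n}}\pr{F\inp 1}^n\xrightarrow{j^n}\pr{\Perm\pr F}^n,
\]
where $j:F\inp 1\to\Perm\pr F$ is induced by the inclusion $\{\inp 1\epi\inp 1\}\hookrightarrow\Tw\pr{\Fin_{\ast}^{\act}}$. The first map is a DK-equivalence by the Segal condition on $F$, the map $j$ is one by Proposition \ref{prop:Perm_und}, and finite products preserve DK-equivalences, so $j^n$ is a DK-equivalence as well.

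The subtlest point is simply the identification of $\Phi_n\circ\eta_{F,\inp n}$ with the two-step factorization through $\pr{F\inp 1}^n$, which is a matter of unwinding the definition of $\eta_F$ given in Construction \ref{const:FactPerm}; once this identification is in hand, the remainder of the proof is an elementary chain of applications of 2-out-of-3 together with Propositions \ref{prop:Seg} and \ref{prop:Perm_und}.
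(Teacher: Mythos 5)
Your proof is correct and follows essentially the same circle of ideas as the paper: both reduce part (2) to showing that $\eta_{F,\inp n}$ is a DK-equivalence and then appeal to Proposition \ref{prop:Seg} and Proposition \ref{prop:Perm_und}. The only differences are organizational. Where you use the natural weak equivalence $\pr{X,u}\mapsto u$ from $\eta_{F,\inp n}\circ\alpha_{F,\inp n}$ to $\beta_{F,\inp n}$ to transfer the problem, the paper instead restricts to $n=1$ (after first checking that both $\Path\pr{\eta_F}$ and $\Fact\circ\Perm\pr F$ satisfy the Segal condition) and then uses the section $\sigma_{F,\inp 1}$ of $\alpha_{F,\inp 1}$ to identify $\beta_{F,\inp 1}\circ\sigma_{F,\inp 1}$ with $\eta_{F,\inp 1}$. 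Your factorization of $\Phi_n\circ\eta_{F,\inp n}$ through $\pr{F\inp 1}^n$ is correct and does the same work as the paper's reduction to $n=1$, just carried out at all $n$ simultaneously; it slightly shortens the argument by avoiding the explicit verification that the two functors being compared satisfy the Segal condition.
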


\begin{proof}
The natural transformations $\alpha$ and $\beta$ are obtained
by applying Construction \ref{const:lax->ord} to the components
of the natural transformation $\eta$ of Construction \ref{const:FactPerm}.
Property (1) follows from Remark \ref{rem:path}. We complete
the proof by proving (2).

Suppose we are given a functor $F$ as in (2). We must show that
$\beta_{F}:\Path\pr{\eta_{F}}\To\Fact\circ\Perm\pr F$ is a natural
DK-equivalence. By part (1), the functor $\Path\pr{\eta_{F}}$
satisfies the Segal condition. By Proposition \ref{prop:Seg},
the functor $\Fact\circ\Perm\pr F$ also satisfies the Segal
condition. Therefore, it suffices to show that the functor
\[
\beta_{F,\inp 1}:\Path\pr{\eta_{F}}\inp 1\to\pr{\Fact\circ\Perm\pr F}\inp 1
\]
is a DK-equivalence. For this, we observe that the functor $\alpha_{F,\inp 1}:\Path\pr{\eta_{F}}\inp 1\to F\inp 1$
has a section $\sigma_{F,\inp 1}:F\inp 1\to\Path\pr{\eta_{F}}\inp 1$
satisfying $\beta_{F,\inp 1}\circ\sigma_{F,\inp 1}=\eta_{F,\inp 1}$.
It will therefore suffice to show that the map
\[
\eta_{F,\inp 1}:F\inp 1\to\pr{\Fact\circ\Perm\pr F}\inp 1
\]
is a DK-equivalence. According to Proposition \ref{prop:Seg},
there is a DK-equivalence $\pr{\Fact\circ\Perm\pr F}\inp 1\to\Perm\pr F$,
so it suffices to show that the composite
\[
F\inp 1\xrightarrow{\eta_{F,\inp 1}}\pr{\Fact\circ\Perm\pr F}\inp 1\to\Perm\pr F=\int_{\Tw\pr{\Fin_{\ast}^{\act}}^{\op}}F^{\Tw}
\]
is a DK-equivalence. But this is the content of Proposition \ref{prop:Perm_und},
and we are done.
\end{proof}

\section{\label{sec:main}Main Result}

We now state and prove the main theorem of this paper (Theorem
\ref{thm:main}). 
\begin{notation}
\label{nota:loc}We let $L:\sf{SMRelCat}\to\cal{SM}\Cat_{\infty}$
denote the functor which is characterized by the following universal
property: There is a natural (in $\pr{\cal C,\cal W}\in\sf{SMRelCat}$
and $\cal D^{\t}\in\cal{SM}\Cat_{\infty}$) equivalence
\[
\Fun^{\t}\pr{L\pr{\cal C,\cal W},\cal D}\simeq\Fun^{\t,\cal W}\pr{\cal C,\cal D},
\]
where $\Fun^{\t,\cal W}\pr{\cal C,\cal D}\subset\Fun^{\t}\pr{\cal C,\cal D}$
denotes the full subcategory spanned by the symmetric monoidal
functors $\cal C^{\t}\to\cal D^{\t}$ carrying each morphism
in $\cal W$ to an equivalence in $\cal D$.
\end{notation}

\begin{thm}
\label{thm:main}The functor $L:\sf{SMRelCat}\to\cal{SM}\Cat_{\infty}$
induces a categorical equivalence
\[
\sf{SMRelCat}[{\rm DK}^{-1}]\xrightarrow{\simeq}\cal{SM}\Cat_{\infty},
\]
where the left-hand side denotes the localization at the symmetric
monoidal relative functors whose underlying functors are DK-equivalences.
\end{thm}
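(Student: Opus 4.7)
The plan is to factor the localization functor $L$ through a chain of equivalences anchored by Theorem \ref{thm:Man10}. The intermediate steps use (i) strictification of symmetric monoidal categories to compare $\sf{SMRelCat}$ with $\sf{PermRelCat}$, (ii) Theorem \ref{thm:Man10} to pass to Segal functors $\Fin_\ast \to \sf{RelCat}$, (iii) a diagrammatic version of Barwick--Kan's theorem to pass to $\Cat_\infty$-valued Segal functors, and (iv) straightening/unstraightening to identify the latter with $\cal{SM}\Cat_\infty$.

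More concretely, I would first show that the inclusion $\sf{PermRelCat} \hookrightarrow \sf{SMRelCat}$ becomes an equivalence on DK-localizations, using the classical strictification of symmetric monoidal categories, which is natural up to symmetric monoidal isomorphism and extends to the relative setting because the coherence equivalences are preserved by any relative symmetric monoidal functor. Combining this with Theorem \ref{thm:Man10} gives
\[
\sf{SMRelCat}[{\rm DK}^{-1}] \simeq \sf{PermRelCat}[{\rm DK}^{-1}] \simeq \Fun^{{\rm Seg}}\pr{\Fin_\ast, \sf{RelCat}}[{\rm DK}^{-1}].
\]
Barwick--Kan's theorem provides $\sf{RelCat}[{\rm DK}^{-1}] \simeq \Cat_\infty$; promoting this to diagrams (via, e.g., the Rezk/Barwick--Kan model structure on $\sf{RelCat}$ and the projective model structure on $\Fun\pr{\Fin_\ast, \sf{RelCat}}$, so that the Segal condition corresponds on either side) yields
\[
\Fun^{{\rm Seg}}\pr{\Fin_\ast, \sf{RelCat}}[{\rm DK}^{-1}] \simeq \Fun^{{\rm Seg}}\pr{\Fin_\ast, \Cat_\infty},
\]
and straightening/unstraightening identifies the right-hand side with $\cal{SM}\Cat_\infty$.

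The main obstacle is verifying that the composite of these equivalences agrees, as a functor to $\cal{SM}\Cat_\infty$, with the functor $L$ characterized by Notation \ref{nota:loc}. Concretely, one must check that for each symmetric monoidal relative category $\pr{\cal C, \cal W}$ and each symmetric monoidal $\infty$-category $\cal D^\t$, the composite carries a symmetric monoidal functor $\cal C^\t \to \cal D^\t$ that inverts $\cal W$ to a symmetric monoidal functor out of the image of $\pr{\cal C, \cal W}$, naturally in both variables, thereby realizing the universal property defining $L$. A secondary technical point is making the diagrammatic enhancement of Barwick--Kan precise; this should follow from the fact that the subcategory of Segal functors is closed under natural DK-equivalence and cut out by a homotopy-invariant condition, so that localizing the functor category is compatible with passing to the Segal subcategory.
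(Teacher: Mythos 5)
Your high-level decomposition matches the paper's: strictify to $\sf{PermRelCat}$, apply Theorem \ref{thm:Man10}, promote Barwick--Kan to diagrams (this is Corollary \ref{cor:FunSeg} in the paper), and finish with straightening/unstraightening. But you correctly flag the crux — checking that the resulting composite equivalence agrees with the functor $L$ of Notation \ref{nota:loc} — as ``the main obstacle'' and then leave it open, which is exactly where the work lies. Without this, you have an abstract equivalence $\sf{SMRelCat}[{\rm DK}^{-1}]\simeq\cal{SMC}\sf{at}_\infty$ but no assurance that it is induced by monoidal localization, which is what the theorem asserts.

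The paper's mechanism for closing this gap is the detour through relative cocartesian fibrations over $\Fin_\ast$. One introduces the category $\sf{RelCocart}^{\rm SM}(\Fin_\ast)$ and a fiberwise localization functor $L^{\rm fib}$ characterized by a universal property paralleling that of $L$. The key point is the lax natural transformation $\Fact(\cal C)\To\cal C^\bullet$ from Proposition \ref{prop:Seg}(2), whose Grothendieck construction yields a natural map $\int_{\Fin_\ast}\Fact(-)\to(-)^\t$ in $\sf{RelCocart}^{\rm SM}(\Fin_\ast)$. Corollary \ref{cor:rel_cart} then shows this map becomes an equivalence after applying $L^{\rm fib}$, and comparing universal properties identifies $L^{\rm fib}\circ(-)^\t$ with $L$. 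This is precisely the identification you need: it ties the chain of abstract equivalences back to the specific functor $L$. A commuting square relating $L^{\rm fib}\circ\int_{\Fin_\ast}$ to $\Fun(\Fin_\ast,L)$ via unstraightening, together with Corollary \ref{cor:FunSeg}, then finishes the argument. So the gap is real and nontrivial, and filling it requires both the functoriality of the $\Fact\pr{\cal C}\To\cal C^\bullet$ comparison and the fiberwise-localization formalism of Appendix \ref{sec:rel_cart_fib}; a purely formal ``the universal properties should match up'' argument does not suffice without a concrete natural comparison map in hand.

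A minor secondary point: you assert that strictification ``extends to the relative setting because the coherence equivalences are preserved by any relative symmetric monoidal functor.'' This is fine, but the relevant justification in the paper is May's functorial strictification \cite[Proposition 4.2]{May78}, which produces a permutative replacement together with a symmetric monoidal equivalence natural in strong symmetric monoidal functors; naturality is what makes the inclusion a homotopy equivalence of relative categories, not merely a DK-equivalence on each object.
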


\begin{proof}
Since symmetric monoidal categories are functorially equivalent
to permutative categories \cite[Proposition 4.2]{May78}, the
inclusion $\sf{PermRelCat}\hookrightarrow\sf{SMRelCat}$ is a
homotopy equivalence of relative categories. It will therefore
suffice to show that the functor $\sf{PermRelCat}[{\rm DK}^{-1}]\to\cal{SM}\Cat_{\infty}$
is a categorical equivalence.

Define a (ordinary) category $\sf{RelCocart}^{{\rm SM}}\pr{\Fin_{\ast}}$
as follows: Its objects are relative cocartesian fibrations $\cal E\to\sf{Fin}_{\ast}$
whose induced cocartesian fibration $\cal E[{\rm weqfib}^{-1}]\to\Fin_{\ast}$
(Proposition \ref{prop:fiberwise_loc}) is a symmetric monoidal
$\infty$-category. A morphism $\pr{p:\cal E\to\Fin_{\ast}}\to\pr{q:\cal E'\to\sf{Fin}_{\ast}}$
is a relative functor $f:\cal E\to\cal E'$ satisfying $qf=p$
and which carries $p$-cocartesian morphisms to $q$-cocartesian
morphisms. Using Proposition \ref{prop:fiberwise_loc}, we can
extend $L$ to a functor $L^{{\rm fib}}:\sf{RelCocart}^{\mathrm{SM}}\pr{\Fin_{\ast}}\to\cal{SM}\Cat_{\infty}$
which is characterized by the natural equivalence
\[
\Fun^{\t}\pr{L^{{\rm fib}}\pr{\cal E},\cal D^{\t}}\simeq\Fun_{\Fin_{\ast}}^{{\rm cocart},{\rm fib}}\pr{\cal E,\cal D^{\t}}.
\]
(The right-hand side is defined as in Notation \ref{nota:Funcart}.)
Proposition \ref{prop:Seg} gives us a natural transformation
\[
\int_{\Fin_{\ast}}\Fact\pr -\to\int_{\Fin_{\ast}}\pr -^{\bullet}=\pr -^{\t}
\]
of functors $\sf{PermRelCat}\to\sf{RelCocart}^{{\rm SM}}\pr{\Fin_{\ast}}$.
By Corollary \ref{cor:rel_cart}, this natural transformation
becomes a natural equivalence when composed with the functor
$L^{{\rm fib}}$. Therefore, it suffices to show that the composite
\[
\sf{PermRelCat}\xrightarrow{\Fact}\Fun^{{\rm Seg}}\pr{\Fin_{\ast},\sf{RelCat}}\xrightarrow{\int_{\Fin_{\ast}}}\sf{RelCocart}^{{\rm SM}}\pr{\Fin_{\ast}}\xrightarrow{L^{{\rm fib}}}\cal{SM}\Cat_{\infty}
\]
is a localization at DK-equivalences. Thus, in light of Theorem
\ref{thm:Man10}, we are reduced to showing that the composite
$L^{{\rm fib}}\circ\int_{\Fin_{\ast}}$ is a localization at
DK-equivalences. Using Proposition \ref{prop:fiberwise_loc},
we deduce that the diagram 
\[\begin{tikzcd}
	{\operatorname{Fun}^{\mathrm{Seg}}(\mathsf{Fin}_\ast,\mathsf{RelCat})} & {\mathsf{RelCocart}^{\mathrm{SM}}({\mathsf{Fin}_\ast})} \\
	{\operatorname{Fun}^{\mathrm{Seg}}(\mathsf{Fin}_\ast,\mathcal{C}\mathsf{at}_\infty)} & {\mathcal{SMC}\mathsf{at}_\infty}
	\arrow["{\int_{\mathsf{Fin}_\ast}}", from=1-1, to=1-2]
	\arrow["{\operatorname{Fun}(\mathsf{Fin}_\ast,L)}"', from=1-1, to=2-1]
	\arrow["{L^{\mathrm{fib}}}", from=1-2, to=2-2]
	\arrow["\simeq"', from=2-1, to=2-2]
\end{tikzcd}\]commutes up to natural equivalence, where the bottom horizontal
arrow is the unstraightening equivalence and $L:\sf{RelCat}\to\Cat_{\infty}$
is defined in Definition \ref{def:loc}. Therefore, it suffices
to show that the left vertical arrow is a localization at DK-equivalences.
This is the content of Corollary \ref{cor:FunSeg}, and we are
done.
\end{proof}

\section{\label{sec:appl}Application}

As an application of Theorem \ref{thm:main}, we give a short
proof of the following theorem of Nikolaus and Sagave \cite{NS17}.
Note that the same argument works in the monoidal case, too.
\begin{thm}
\label{thm:NS17}For every presentably symmetric monoidal $\infty$-category
$\cal C^{\t}$, there is a left proper, combinatorial, simplicial
symmetric monoidal model category (with cofibrant unit) whose
underlying symmetric monoidal $\infty$-category is equivalent
to $\cal C$.
\end{thm}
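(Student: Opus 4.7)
The plan is to exhibit $\cal C^{\t}$ as a monoidal left Bousfield localization of a Day convolution model structure on simplicial presheaves over a small permutative category, where the latter is obtained by applying Theorem \ref{thm:main} to a presentation of $\cal C^{\t}$ as an accessible monoidal localization of a presheaf $\infty$-category.

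First, I would reduce to presheaves. Since $\cal C^{\t}$ is presentably symmetric monoidal, it can be written as an accessible monoidal localization $\cal C^{\t} \simeq \cal P\pr{\cal K}^{\t}[S^{-1}]$, where $\cal K^{\t}$ is a small symmetric monoidal $\infty$-category, $\cal P\pr{\cal K}^{\t}$ carries the Day convolution, and $S$ is a small set of morphisms whose closure under tensoring with objects of $\cal P\pr{\cal K}$ still generates the same localization. I would then apply Theorem \ref{thm:main} to $\cal K^{\t}$: after replacing $\cal K^{\t}$ by an equivalent permutative symmetric monoidal $\infty$-category, one obtains a permutative relative category $\pr{\cal D,\cal W}$ with $L\pr{\cal D}^{\t}\simeq\cal K^{\t}$.

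Next, I would construct the model category. Equip the simplicial presheaf category $\sf{sSet}^{\cal D^{\op}}$ with the projective model structure and the Day convolution induced by the permutative structure on $\cal D$. This is a combinatorial, simplicial, left proper symmetric monoidal model category whose unit is the representable at the monoidal unit of $\cal D$, hence cofibrant; its underlying symmetric monoidal $\infty$-category is the Day convolution on the nerve of $\cal D$. Now perform two successive left Bousfield localizations: first at the Yoneda images of the morphisms of $\cal W$, yielding a model category whose underlying symmetric monoidal $\infty$-category is $\cal P\pr{L\pr{\cal D}}^{\t}=\cal P\pr{\cal K}^{\t}$; then at (cofibrant replacements of) $S$, producing the desired combinatorial, simplicial, left proper symmetric monoidal model category $\cal M$ with cofibrant unit, presenting $\cal C^{\t}$.

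The main technical obstacle is verifying that each Bousfield localization is \emph{monoidal}, i.e., that the pushout-product axiom persists and the unit remains cofibrant. The standard criterion is that the localizing morphisms be closed, up to weak equivalence, under tensoring with the domains and codomains of a set of generating cofibrations. For the localization at $\cal W$, this follows directly from the defining condition of a symmetric monoidal relative category: $\cal W$ is stable under $\otimes$ in $\cal D$, and the Yoneda embedding is strong monoidal with respect to the Day convolution, so tensoring Yoneda images of $\cal W$ with representables again yields Yoneda images of $\cal W$. For the localization at $S$, closure is ensured by the choice of presentation in the first step, i.e., by the monoidality of the accessible localization $\cal P\pr{\cal K}^{\t}\to\cal C^{\t}$. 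Combining these with the standard transfer results for left proper combinatorial simplicial model categories yields the required symmetric monoidal model category.
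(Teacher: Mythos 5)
Your proposal is correct and follows essentially the same route as the paper: reduce to presheaves, invoke Theorem \ref{thm:main} to descend to an ordinary (permutative) category, take simplicial presheaves with the Day convolution projective model structure, and then left Bousfield localize. The only real difference is in how the localization steps are handled: the paper delegates both reductions to \cite[Propositions 2.3 and 2.4]{NS17}, which already contain the monoidal Bousfield localization machinery, whereas you spell out the two-stage localization (first at the Yoneda image of $\cal W$, then at $S$) and verify the pushout-product axiom persists by hand. Your verification for the $\cal W$-localization is the right argument — $\cal W$ is stable under tensoring with identities since $\otimes$ is a relative functor, the Yoneda embedding is strong monoidal for Day convolution, and the generating (co)fibrations have (co)domains of the form $y(d)\otimes K$, so the closure condition holds — and this is essentially the content hidden inside the NS17 citations. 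So the two proofs are the same up to how much is made explicit versus cited.
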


\begin{proof}
As explained in \cite[Proposition 2.4]{NS17}, it is sufficient
to consider the case where $\cal C^{\t}=\cal P\pr{\cal A}^{\t}$
for some small symmetric monoidal $\infty$-category $\cal{A^{\t}}$,
where $\cal P\pr{\cal A}^{\t}$ denotes the symmetric monoidal
$\infty$-category of space-valued presheaves on $\cal A$, with
symmetric monoidal structure given by the Day convolution. Using
Theorem \ref{thm:main}, we can find a symmetric monoidal category
$\cal A_{0}$ and a symmetric monoidal localization functor $L:\cal A_{0}^{\t}\to\cal A^{\t}$.
The induced functor $\cal P\pr{\cal A_{0}}\to\cal P\pr{\cal A}$
is an accessible localization, so as explained in \cite[Proposition 2.3]{NS17},
it suffices to show that $\cal P\pr{\cal A_{0}}^{\t}$ is presented
by a left proper, combinatorial, simplicial symmetric monoidal
model category. This is clear: For instance, consider the model
category $\Fun\pr{\cal A_{0},\SS}$ of simplicial presheaves,
equipped with the projective model structure. This model category
is left proper, combinatorial, and simplicial. It is also symmetric
monoidal with respect to the Day convolution, as can be checked
on the level of generating cofibrations and generating acyclic
cofibrations \cite[Theorem 11.6.1]{Hirschhorn}. It is clear
from the universal property of the Day convolution monoidal structure
\cite[Corollary 4.8.1.12]{HA} that this symmetric monoidal model
category presents $\cal P\pr{\cal A_{0}}$, and we are done.
\end{proof}
\appendix

\section{\label{sec:relcat}Relative Categories}

In this section, we recall and establish a few key results on
the homotopy theory of relative categories. 
\begin{defn}
\label{def:loc}We define a functor $L:\sf{RelCat}\to\Cat_{\infty}$
so that there is a natural equivalence
\[
\Fun\pr{L\pr{\cal C,\cal W},\cal D}\simeq\Fun^{\cal W}\pr{\cal C,\cal D},
\]
where $\Fun^{\cal W}\pr{\cal C,\cal D}$ denotes the full subcategory
spanned by the functors $\cal C\to\cal D$ that carry morphisms
in $\cal W$ to equivalences in $\cal D$.
\end{defn}

In \cite{BK12b}, Barwick and Kan essentially showed that the
functor $L:\sf{RelCat}\to\Cat_{\infty}$ is a localization at
the DK-equivalences, using simplicial categories as models of
$\pr{\infty,1}$-categories. For later applications, we provide
a version of this result using the model category $\SS^{+}$
of \textit{marked simplicial sets}, introduced in \cite[Chapter 3]{HTT}.
\begin{thm}
\label{thm:BK12_sset}The inclusion $\iota:\sf{RelCat}\hookrightarrow\SS^{+}$
is a homotopy equivalence of relative categories (Definition
\ref{def:relcat}).
\end{thm}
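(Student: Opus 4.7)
The plan is to factor $\iota$, up to natural weak equivalence, through the category of simplicial categories and then invoke standard homotopy equivalences for each factor. Specifically, I would compare $\iota$ with the composite
\[
\sf{RelCat}\xrightarrow{L^{H}}\sf{Cat}_{\Delta}\xrightarrow{N^{+}}\SS^{+},
\]
where $L^{H}$ denotes the Dwyer--Kan hammock localization (viewed as a relative functor into simplicial categories equipped with the DK-equivalences) and $N^{+}$ denotes the marked coherent nerve.

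Each factor is a homotopy equivalence of relative categories. For $N^{+}$, this reflects the Quillen equivalence between the Bergner model structure on $\sf{Cat}_{\Delta}$ and the Cartesian model structure on $\SS^{+}$ established in \cite[Section 3.1]{HTT}; any Quillen equivalence of model categories descends, upon bundling cofibrant and fibrant replacements, to a homotopy equivalence of underlying relative categories, with the derived unit and counit supplying the requisite natural zig-zags. For $L^{H}$, the analogous statement is a relative-categorical strengthening of the Barwick--Kan equivalence, using as a homotopy inverse the functor that sends a simplicial category to its underlying relative category (morphisms being vertices of mapping spaces, and weak equivalences being the simplicial equivalences).

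It then remains to compare $\iota$ with the composite $N^{+}\circ L^{H}$. For each relative category $\pr{\cal C,\cal W}$, the canonical simplicial functor $\cal C\to L^{H}\pr{\cal C,\cal W}$ (sending each morphism in $\cal C$ to a length-one hammock) induces, by the functoriality of $N^{+}$, a map of marked simplicial sets $\iota\pr{\cal C,\cal W}\to N^{+}L^{H}\pr{\cal C,\cal W}$, natural in $\pr{\cal C,\cal W}$.

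The main obstacle is showing that this natural comparison map is a pointwise weak equivalence in $\SS^{+}$. After fibrant replacement in $\SS^{+}$, both sides should present the $\infty$-categorical localization $\cal C[\cal W^{-1}]$: the right-hand side by the Dwyer--Kan/Hinich identification of the simplicial localization with the $\infty$-categorical one, and the left-hand side by the universal property of the localization, which can be verified directly from the cartesian fibrant replacement of a marked nerve. Assembling these identifications into a genuine pointwise $\SS^{+}$-equivalence, and checking that the comparison realizes the canonical equivalence between the two presentations of $\cal C[\cal W^{-1}]$, constitutes the chief technical work.
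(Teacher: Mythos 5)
Your route is genuinely different from the paper's. The paper avoids simplicial categories altogether: it works with the category $\sf{Cat}^{+}$ of categories with a marked set of morphisms, builds an explicit candidate inverse $D\pr{X,S}=\pr{N\pr{\Del_{/X}},M_{\pr{X,S}}}$ using the category of simplices, constructs the last-vertex map $\varepsilon_{\pr{X,S}}\colon\widetilde\iota D\pr{X,S}\to\pr{X,S}$, and reduces by a homotopy pushout to the case of a minimally marked simplicial set, where $\varepsilon_{X^{\flat}}$ being a categorical equivalence is precisely Joyal's delocalization theorem \cite[Theorem 1.3]{Ste2017}. Everything is checked at the level of an explicit natural transformation, and the argument is self-contained modulo that one citation. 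Your proposal instead routes through $\sf{Cat}_{\Delta}$ via $L^{H}$ and the coherent nerve, which is a legitimate and well-trodden alternative path.

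However, there is a concrete error in the middle step. The functor you propose as a homotopy inverse to $L^{H}$ --- sending a simplicial category to its underlying relative category, with morphisms the $0$-simplices of mapping spaces and weak equivalences the simplicial equivalences --- is not a homotopy inverse. Passing to $0$-simplices destroys all higher homotopy in the mapping spaces: take $\cal D$ to be the one-object simplicial category with $\operatorname{Map}\pr{x,x}=K\pr{\bb Z,2}$ (a simplicial abelian group via Dold--Kan, so with a single $0$-simplex). Its underlying relative category is the terminal category with trivial markings, whose hammock localization is terminal, and this is not DK-equivalent to $\cal D$. So the round trip $\sf{Cat}_{\Delta}\to\sf{RelCat}\to\sf{Cat}_{\Delta}$ is not connected to the identity by a zig-zag of natural DK-equivalences, and your Definition \ref{def:relcat} homotopy-equivalence claim for $L^{H}$ fails with this inverse. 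The correct Barwick--Kan inverse is substantially more elaborate (it records the whole simplicial direction, e.g.\ via the classifying diagram or flattening, rather than just the $0$-simplices), and that machinery is exactly what the paper's category-of-simplices construction is engineered to sidestep. Even setting this aside, you flag the final comparison $\iota\To N^{+}\circ L^{H}$ as ``the chief technical work'' and leave it unperformed; so as written the proposal contains both a wrong claim about the inverse to $L^{H}$ and an unclosed gap at the end.
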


\begin{proof}
To avoid confusions, we \textit{will }make a distinction between
categories and their nerves for the duration of the proof. Thus
$\iota$ is given by $\iota\pr{\cal C,\cal W}=\pr{N\pr{\cal C},\mor\cal W}$.

Let $\sf{Cat}^{+}$ denote the category of pairs $\pr{\cal C,W}$,
where $\cal C$ is a category and $W$ is a set of morphisms
of $\cal C$ containing all identity morphisms. Morphisms $\pr{\cal C,W}\to\pr{\cal D,W'}$
are functors $\cal C\to\cal D$ that carry $W$ into $W'$. The
nerve functor determines a fully faithful functor $\sf{Cat}^{+}\to\sf{sSet}^{+}$,
and we regard $\sf{Cat}^{+}$ as a relative category whose weak
equivalences are the maps whose images in $\SS^{+}$ are weak
equivalences. The inclusion $\sf{RelCat}\to\sf{Cat}^{+}$ is
a homotopy equivalence, so it suffices to show that the inclusion
$\widetilde{\iota}:\sf{Cat}^{+}\to\sf{sSet}^{+},\,\pr{\cal C,W}\mapsto\pr{N\pr{\cal C},W}$
is a homotopy equivalence.

For each simplicial set $X$, let $\Del_{/X}=\Del\times_{\SS}\SS_{/X}$
denote the category of simplices of $X$. There is a map $\varepsilon_{X}:N\pr{\Del_{/X}}\to X$
of simplicial sets which carries an $m$-simplex $\Delta^{n_{0}}\to\cdots\to\Delta^{n_{m}}\xrightarrow{\sigma}X$
to the $m$-simplex $\Delta^{m}\xrightarrow{f}\Delta^{n_{m}}\xrightarrow{\sigma}X$,
where the map $f$ carries each vertex $i\in\Delta^{m}$ to the
image of $n_{i}\in\Delta^{n_{i}}$. Given a set $S$ of edges
of $X$, we let $M_{\pr{X,S}}$ denote the set of morphisms $\alpha$
of $\Del_{/X}$ satisfying at least one of the following conditions:
\begin{enumerate}
\item The edge $\varepsilon_{X}\pr{\alpha}$ is degenerate.
\item The morphism $\alpha$ has the form $\Delta^{\{0\}}\hookrightarrow\Delta^{1}\xrightarrow{\sigma}X$
for some $\sigma\in S$.
\end{enumerate}
We then define a functor
\[
D:\SS^{+}\to\sf{Cat}^{+}
\]
by $D\pr{X,S}=\pr{N\pr{\Del_{/X}},M_{\pr{X,S}}}$. The map $\varepsilon_{X}$
induces a map $\varepsilon_{\pr{X,S}}:\widetilde{\iota}\circ D\pr{X,S}\to\pr{X,S}$
of marked simplicial sets. To complete the proof, it suffices
to show that $\varepsilon_{\pr{X,S}}$ is a weak equivalence.

By construction, there are pushout diagrams of marked simplicial
sets 
\[\begin{tikzcd}
	{\coprod_S(\Delta^1)^\flat} & {(N(\mathbf{\Delta}_{/X}),M_{X^\flat})} & {X^\flat} \\
	{\coprod_S(\Delta^1)^\sharp} & {(N(\mathbf{\Delta}_{/X}),M_{(X,S)})} & {(X,S).}
	\arrow[from=1-1, to=1-2]
	\arrow[from=1-1, to=2-1]
	\arrow["{\varepsilon_{X^\flat}}", from=1-2, to=1-3]
	\arrow[from=1-2, to=2-2]
	\arrow[from=1-3, to=2-3]
	\arrow[from=2-1, to=2-2]
	\arrow["{\varepsilon_{(X,S)}}", from=2-2, to=2-3]
\end{tikzcd}\]These squares are homotopy pushouts since every marked simplicial
set is cofibrant and the vertical arrows are cofibrations. Therefore,
it suffices to show that $\varepsilon_{X^{\flat}}$ is a weak
equivalence. But this is the content of Joyal's delocalization
theorem \cite[Theorem 1.3]{Ste2017}, and the proof is complete.
\end{proof}
\begin{cor}
\label{cor:FunSeg}The post-composition by the functor $L:\sf{RelCat}\to\Cat_{\infty}$
induces a categorical equivalence
\[
\Fun^{{\rm Seg}}\pr{\Fin_{\ast},\sf{RelCat}}[{\rm DK}^{-1}]\xrightarrow{\simeq}\Fun^{{\rm Seg}}\pr{\Fin_{\ast},\Cat_{\infty}}.
\]
\end{cor}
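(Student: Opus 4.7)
The plan is to factor $L_{\ast}$ through marked simplicial sets, using the inclusion $\iota:\sf{RelCat}\hookrightarrow\SS^{+}$ followed by the fibrant-replacement functor $\SS^{+}\to\Cat_{\infty}$ associated with the Cartesian model structure. This composite recovers $L$ up to natural equivalence, so the problem splits into a comparison at the level of marked simplicial sets and a passage from $\SS^{+}$ to $\Cat_{\infty}$.

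By Theorem \ref{thm:BK12_sset}, $\iota$ is a homotopy equivalence of relative categories (with $\SS^{+}$ equipped with its Cartesian weak equivalences). The homotopy inverse $D$ and the connecting zig-zags of natural weak equivalences are pointwise in nature, so whiskering yields a homotopy equivalence of relative categories
\[
\iota_{\ast}:\Fun\pr{\Fin_{\ast},\sf{RelCat}}\xrightarrow{\simeq}\Fun\pr{\Fin_{\ast},\SS^{+}},
\]
where both sides carry pointwise weak equivalences. On the other hand, since the Cartesian model structure on $\SS^{+}$ is combinatorial, the projective model structure on $\Fun\pr{\Fin_{\ast},\SS^{+}}$ exists and presents $\Fun\pr{\Fin_{\ast},\Cat_{\infty}}$. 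Composing, we obtain an equivalence of $\infty$-categories
\[
\Fun\pr{\Fin_{\ast},\sf{RelCat}}[\mathrm{DK}^{-1}]\xrightarrow{\simeq}\Fun\pr{\Fin_{\ast},\Cat_{\infty}}
\]
induced by $L_{\ast}$.

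It then remains to restrict to the Segal full subcategories. For this, I need that an object $F$ is Segal in the relative-categorical sense iff $L_{\ast}F$ is Segal in $\Cat_{\infty}$. This follows from two observations: first, by the universal property of $L$, a morphism in $\sf{RelCat}$ is a DK-equivalence iff its image under $L$ is an equivalence; second, $L$ preserves finite products of relative categories. For the latter, a functor $\cal C_{1}\times\cal C_{2}\to\cal D$ to an $\infty$-category inverts $\cal W_{1}\times\cal W_{2}$ iff, by decomposing $\pr{w_{1},w_{2}}$ as $\pr{w_{1},\id}\circ\pr{\id,w_{2}}$ and applying the universal property of localization in each variable, it factors through $L\cal C_{1}\times L\cal C_{2}$. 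The main technical point is this product-preservation of $L$, together with checking that the homotopy equivalence of Theorem \ref{thm:BK12_sset} is sufficiently functorial to whisker over $\Fin_{\ast}$; both are evident from the explicit constructions, and everything else reduces routinely to the two-step passage through $\SS^{+}$.
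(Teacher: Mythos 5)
The overall strategy matches the paper's: pass through $\SS^{+}$ via Theorem \ref{thm:BK12_sset}, then to $\Cat_{\infty}$ via the projective model structure on $\Fun\pr{\Fin_{\ast},\SS^{+}}$ and \cite[Proposition 4.2.4.4]{HTT}. Your equivalence on the ambient functor categories is correct, and the observation that $L$ detects DK-equivalences and preserves finite products, so that $F$ satisfies the Segal condition if and only if $L\circ F$ does, is also sound.

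The gap is in the restriction to the Segal full subcategories. Knowing that Segalness corresponds under the ambient equivalence identifies the essential images, but it does not establish that $\Fun^{\rm Seg}\pr{\Fin_{\ast},\sf{RelCat}}[\mathrm{DK}^{-1}]\to\Fun\pr{\Fin_{\ast},\sf{RelCat}}[\mathrm{DK}^{-1}]$ is fully faithful. In general, the localization of a full subcategory is not a full subcategory of the localization, even when the subcategory is closed under weak equivalences: take $\cal B$ to be the free category on objects $a,x$ with generating morphisms $u\colon a\to x$, $v\colon x\to a$, $w\colon x\to x$, let the weak equivalences be $\id_{a}$, $\id_{x}$, and the powers of $w$, and let $\cal A=\{a\}$; then $\cal A$ is closed under weak equivalences and $\cal A[W_{\cal A}^{-1}]=\cal A$, yet $\cal B[W^{-1}]$ acquires new endomorphisms of $a$ such as $vw^{-1}u$. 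The paper handles this point by exhibiting the Segal objects of $\Fun\pr{\Fin_{\ast},\SS^{+}}$ as the local objects of a left Bousfield localization of the projective model structure and invoking \cite[Theorem 7.5.30]{HCHA} to obtain full faithfulness of the inclusion after localization. Your proposal needs a step of this kind; ``restricting to the Segal full subcategories'' is not routine and is precisely where the Bousfield localization argument enters.
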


\begin{proof}
By Theorem \ref{thm:BK12_sset}, the inclusion
\[
\Fun^{{\rm Seg}}\pr{\Fin_{\ast},\sf{RelCat}}\to\Fun^{{\rm Seg}}\pr{\Fin_{\ast},\SS^{+}}
\]
is a homotopy equivalence of relative categories. Here the right
hand denotes the full subcategory of $\Fun\pr{\Fin_{\ast},\SS^{+}}$
spanned by the functors $F:\Fin_{\ast}\to\SS^{+}$ satisfying
the Segal condition, i.e., for each $n\geq0$, the functor $F\inp n\to\prod_{1\leq i\leq n}F\inp 1$
is a weak equivalence of marked simplicial sets. We can extend
$L$ to a functor $\widetilde{L}:\SS^{+}\to\Cat_{\infty}$ characterized
by a universal property as in Definition \ref{def:loc}, and
it suffices to show that $\widetilde{L}$ induces a categorical
equivalence
\[
\Fun^{{\rm Seg}}\pr{\Fin_{\ast},\SS^{+}}[{\rm DK}^{-1}]\xrightarrow{\simeq}\Fun^{{\rm Seg}}\pr{\Fin_{\ast},\Cat_{\infty}}.
\]

The projective model structure on $\Fun\pr{\Fin_{\ast},\SS^{+}}$
admits a Bousfield localization whose fibrant objects are the
projectively fibrant objects satisfying the Segal condition \cite[Proposition A.3.7.3]{HTT}.
It follows from \cite[Theorem 7.5.30]{HCHA} that the functor
$\Fun^{{\rm Seg}}\pr{\Fin_{\ast},\SS^{+}}[{\rm DK}^{-1}]\to\Fun\pr{\Fin_{\ast},\SS^{+}}[{\rm DK}^{-1}]$
is fully faithful. Therefore, it suffices to show that the functor
\[
\Fun\pr{\Fin_{\ast},\SS^{+}}[{\rm DK}^{-1}]\to\Fun\pr{\Fin_{\ast},\Cat_{\infty}}
\]
is an equivalence of $\infty$-categories. This follows from
\cite[Proposition 4.2.4.4]{HTT} (or \cite[Theorem 7.9.8]{HCHA}).
\end{proof}

\section{\label{sec:rel_cart_fib}Relative Cartesian Fibrations}

In this section, we establish a few results on relative versions
of cartesian fibrations.
\begin{defn}
\label{def:rel_cart}A \textbf{relative cartesian fibration}
consists of the following data:
\begin{enumerate}
\item A cartesian fibration $p:\cal E\to\cal C$ of $\infty$-categories.
\item For each object $C\in\cal C$, the structure of a relative $\infty$-category
(Definition \ref{def:relcat}) on the fiber $p^{-1}\pr C=\cal E_{C}$.
\end{enumerate}
These data are required to satisfy the following condition: For
each morphism $C\to D$ in $\cal C$, the induced functor $\cal E_{D}\to\cal E_{C}$
preserves weak equivalences. 

If $p:\cal E\to\cal C$ is a relative cartesian fibration, we
will write ${\rm weqfib}={\rm weqfib}\pr p$ for the set of weak
equivalences of the fibers of $p$. We will regard $\cal E$
as a relative category whose weak equivalences are the morphisms
that can be written as a composite of a weak equivalence in a
fiber of $p$, followed by a $p$-cartesian morphism.
\end{defn}

\begin{rem}
There is an obvious dual notion of \textbf{relative cocartesian
fibrations}, and we freely use this notion in the main body of
the paper.
\end{rem}

\begin{example}
\label{exa:rel_gr}Let $\cal C$ be an ordinary category, and
let $F:\cal C^{\op}\to\sf{RelCat}$ be a pseudofunctor. If $U:\sf{RelCat}\to\sf{Cat}$
denotes the forgetful functor, the Grothendieck construction
$\int\pr{U\circ F}\to\cal C$ has a natural structure of a relative
cartesian fibration. We denote the resulting relative category
by $\int F$ and refer to it as the \textbf{relative Grothendieck
construction} of $F$.
\end{example}

\begin{notation}
\label{nota:Funcart}Let $p:\cal X\to\cal C$ and $q:\cal Y\to\cal C$
be cartesian fibrations of $\infty$-categories. We let $\Fun_{\cal C}^{{\rm cart}}\pr{\cal X,\cal Y}\subset\Fun_{\cal C}\pr{\cal X,\cal Y}$
denote the full subcategory spanned by the functors that preserve
cartesian morphisms over $\cal C$. In the case where $p$ is
equipped with the structure of a relative cartesian fibration,
we let $\Fun_{\cal C}^{{\rm cart},{\rm fib}}\pr{\cal X,\cal Y}\subset\Fun_{\cal C}^{{\rm cart}}\pr{\cal X,\cal Y}$
denote the full subcategory spanned by the functors $f:\cal X\to\cal Y$
such that, for each $C\in\cal C$, the functor $f_{C}:\cal X_{C}\to\cal Y_{C}$
carries weak equivalences to equivalences.
\end{notation}

\begin{prop}
\label{prop:fiberwise_loc}Let 
\[\begin{tikzcd}
	{\mathcal{X}} && {\mathcal{Y}} \\
	& {\mathcal{C}}
	\arrow["f", from=1-1, to=1-3]
	\arrow["p"', from=1-1, to=2-2]
	\arrow["q", from=1-3, to=2-2]
\end{tikzcd}\]be a commutative diagram of $\infty$-categories. Suppose that:
\begin{enumerate}
\item The functor $p$ is a relative cartesian fibration.
\item The functor $q$ is a categorical fibration.
\item The functor $f$ carries weak equivalences in the fibers of $p$
to equivalences in $\cal Y$.
\end{enumerate}
Then the following conditions are equivalent:

\begin{enumerate}[label=(\alph*)]

\item The functor $q$ is a cartesian fibration, the functor
$f$ preserves cartesian morphisms over $\cal C$, and for each
$C\in\cal C$, the functor $\cal X_{C}\to\cal Y_{C}$ is a localization
at the weak equivalences.

\item The functor $f$ is a localization at ${\rm fibweq}\pr p$.

\item The functor $q$ is a cartesian fibration, the functor
$f$ belongs to $\Fun^{{\rm cart},{\rm fib}}\pr{\cal X,\cal Y}$,
and for each cartesian fibration $\cal Z\to\cal C$, the functor
$f$ induces a categorical equivalence
\[
\Fun_{\cal C}^{{\rm cart}}\pr{\cal Y,\cal Z}\xrightarrow{\simeq}\Fun_{\cal C}^{{\rm cart},{\rm fib}}\pr{\cal X,\cal Z}.
\]

\end{enumerate}
\end{prop}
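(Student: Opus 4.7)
I plan to prove $(\mathrm{a})\Leftrightarrow(\mathrm{c})$ via the straightening--unstraightening equivalence, and $(\mathrm{a})\Leftrightarrow(\mathrm{b})$ via a structural lemma on fiberwise localization of cartesian fibrations. Hypothesis (3), that $f$ inverts fiberwise weak equivalences, guarantees that all the induced functors between fibers and their localizations actually exist.

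For $(\mathrm{a})\Leftrightarrow(\mathrm{c})$: straightening identifies cartesian fibrations over $\cal C$ with functors $\cal C^{\op}\to\Cat_\infty$, and identifies $\Fun^{\mathrm{cart}}_{\cal C}(-,-)$ with the corresponding mapping space in $\Fun(\cal C^{\op},\Cat_\infty)$. Under this correspondence, $\Fun^{\mathrm{cart,fib}}_{\cal C}(\cal X,\cal Z)$ corresponds to natural transformations $\cal X^\sharp\To\cal Z^\sharp$ whose fiberwise components invert weak equivalences. Thus the universal property in (c) translates to the statement that the restriction map
\[\opn{Map}(\cal Y^\sharp,\cal Z^\sharp)\to\opn{Map}^{\mathrm{fibweq}}(\cal X^\sharp,\cal Z^\sharp)\]
is an equivalence for every $\cal Z^\sharp\in\Fun(\cal C^{\op},\Cat_\infty)$; by the universal property of localization in $\Cat_\infty$ applied objectwise, this is in turn equivalent to $\cal Y^\sharp(C)\simeq\cal X^\sharp(C)[\weq_C^{-1}]$ for every $C$, i.e., (a).

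For $(\mathrm{a})\Leftrightarrow(\mathrm{b})$, I appeal to the following structural lemma: given a cartesian fibration $p:\cal X\to\cal C$ and a class $W$ of morphisms of $\cal X$ lying over identities of $\cal C$, the localization $\cal X\to\cal X[W^{-1}]$ inherits the structure of a cartesian fibration $\cal X[W^{-1}]\to\cal C$ whose fibers are $\cal X_C[W_C^{-1}]$, with the localization map preserving cartesian morphisms. Granting this, if (a) holds then applying the lemma to $W=\mathrm{fibweq}(p)$ produces a cartesian fibration whose fibers agree with those of $q$, and the universal property of localization yields a canonical cartesian map $\cal X[\mathrm{fibweq}^{-1}]\to\cal Y$ over $\cal C$ through which $f$ factors; this map is fiberwise an equivalence between cartesian fibrations, hence an equivalence over $\cal C$, establishing (b). Conversely, if (b) holds, applying the lemma to $\cal Y\simeq\cal X[\mathrm{fibweq}^{-1}]$ immediately yields all three assertions in (a).

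The main obstacle is the structural lemma itself. The cleanest route is through straightening, exploiting the fact that the total space of a cartesian fibration $p:\cal X\to\cal C$ is the oplax colimit of its straightening $\cal X^\sharp$: a functor $\cal X\to\cal D$ that inverts $W$ corresponds to a lax natural transformation $\cal X^\sharp\To\opn{const}_{\cal D}$ whose components invert $W_C$ for each $C$, and by the fiberwise universal property of localization in $\Cat_\infty$ these are in bijection with lax natural transformations out of $\cal X^\sharp[W^{-1}]$, which correspond to functors out of $\opn{Un}(\cal X^\sharp[W^{-1}])$. This identifies the latter cartesian fibration with the localization $\cal X[W^{-1}]$ and makes the cartesian fibration structure manifest.
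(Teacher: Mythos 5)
Your proposal is correct in its main ideas but organizes the proof along a genuinely different decomposition than the paper's. The paper establishes $(\mathrm{a})\Rightarrow(\mathrm{b})$ by citing \cite[Tag 02LW]{kerodon}, then gets $(\mathrm{b})\Rightarrow(\mathrm{a})$ by factoring $p$ through a fiberwise localization using straightening--unstraightening and comparing the two models of the localization; $(\mathrm{b})\Rightarrow(\mathrm{c})$ is then immediate from the universal property of localization, and $(\mathrm{c})\Rightarrow(\mathrm{b})$ again uses the factorization trick together with $(\mathrm{b})\Rightarrow(\mathrm{c})$. You instead prove $(\mathrm{a})\Leftrightarrow(\mathrm{c})$ directly by translating both sides through straightening, and $(\mathrm{a})\Leftrightarrow(\mathrm{b})$ by a structural lemma whose content is precisely \cite[Tag 02LW]{kerodon}, which you propose to establish via the (op)lax colimit description of cartesian total spaces and the observation that lax naturality $2$-cells descend uniquely along fiberwise localizations because localization functors are epimorphic in the $\infty$-categorical sense. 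Your route is more self-contained and conceptually transparent, whereas the paper's is shorter by virtue of the citation.

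Two points in your $(\mathrm{a})\Leftrightarrow(\mathrm{c})$ argument deserve more care. First, condition $(\mathrm{c})$ asserts a categorical equivalence of functor $\infty$-categories, not merely an equivalence of mapping spaces; your argument phrased in terms of $\operatorname{Map}(-,-)$ should be upgraded to the enriched end computing $\Fun_{\Fun(\cal C^{\op},\Cat_\infty)}(-,-)$, using that fiberwise localization induces a fully faithful pullback on $\Fun$ into any target, so that the end of equivalences is an equivalence. Second, the reverse implication (from $(\mathrm{c})$ to the fiberwise localization claim in $(\mathrm{a})$) is not purely formal: one must test against suitable $\cal Z$, most cleanly by taking $\cal Z^{\sharp}$ to be the right Kan extension of a test $\infty$-category $\cal D$ along a point $\{C\}\hookrightarrow\cal C^{\op}$, so that the natural-transformation category reduces to $\Fun(\cal Y^{\sharp}(C),\cal D)$ and the hypothesis becomes exactly the universal property of $\cal Y^{\sharp}(C)$ as the localization of $\cal X^{\sharp}(C)$. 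With these details supplied, the argument goes through.
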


\begin{proof}
We first show that (a)$\iff$(b). The implication (a)$\implies$(b)
is proved in \cite[\href{https://kerodon.net/tag/02LW}{Tag 02LW}]{kerodon}.
For the converse, suppose that condition (b) is satisfied. Use
the straightening--unstraightening equivalence to factor $p$
as a composite $\cal X\xrightarrow{f'}\cal Y'\xrightarrow{q'}\cal C$,
where $\pr{f',q'}$ satisfies condition (a). Using the implication
(a)$\implies$(b), we deduce that $f'$ is a localization at
${\rm fibweq}\pr p$. Therefore, there is a categorical equivalence
$g:\cal Y'\xrightarrow{\simeq}\cal Y$ over $\cal C$, such that
$g'f$ is equivalent to $f$ as a functor over $\cal C$. In
particular, for each $C\in\cal C$, the functor $f_{C}:\cal X_{C}\to\cal Y_{C}$
is equivalent to the composite $\cal X_{C}\xrightarrow{f'_{C}}\cal Y'_{C}\xrightarrow[\simeq]{g_{C}}\cal Y_{C}$.
This implies that $f_{C}$ is a localization at the weak equivalences,
proving that (b)$\implies$(a).

Next, we show that (b)$\implies$(c). Suppose that (b) is satisfied.
Using the implication (b)$\implies$(a), we find that $q$ is
a cartesian fibration and $f$ belongs to $\Fun^{{\rm cart},{\rm fib}}\pr{\cal X,\cal Y}$.
The universal property of $q$ appearing in (c) is immediate
from the universal property of localizations. Hence (b)$\implies$(c).

We complete the proof by showing (c)$\implies$(b). Suppose that
(c) is satisfied. Factor $p$ as $\cal X\xrightarrow{f'}\cal Y'\xrightarrow{q'}\cal C$,
where $f'$ is a localization at ${\rm fibweq}\pr p$ and $q'$
is a categorical fibration. Using the implication (b)$\implies$(c),
we can find a categorical equivalence $g:\cal Y\xrightarrow{\simeq}\cal Y'$
such that $gf$ is naturally equivalent to $f'$. This means
that $f'$ is a localization at ${\rm fibweq}\pr p$, so that
(c)$\implies$(b) as claimed.
\end{proof}
Proposition \ref{prop:fiberwise_loc} has the following corollaries:
\begin{cor}
\label{cor:rel_cart}Consider a commutative diagram 
\[\begin{tikzcd}
	{\mathcal{E}} && {\mathcal{E}'} \\
	& {\mathcal{C}}
	\arrow["f", from=1-1, to=1-3]
	\arrow["p"', from=1-1, to=2-2]
	\arrow["{p'}", from=1-3, to=2-2]
\end{tikzcd}\]of $\infty$-categories satisfying the following conditions:
\begin{enumerate}
\item The functors $p$ and $p'$ are relative cartesian fibrations.
\item The functor $f$ is relative (but may not preserve cartesian
morphisms).
\item For each $C\in\cal C$, the functor $L\pr{\cal E_{C}}\to L\pr{\cal E'_{C}}$
is a categorical equivalence.
\end{enumerate}
Then the functor $\cal E[{\rm weqfib}\pr p^{-1}]\to\cal E'[{\rm weqfib}\pr{p'}^{-1}]$
is a categorical equivalence.
\end{cor}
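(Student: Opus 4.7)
The plan is to reduce the claim to a fiberwise comparison between two cartesian fibrations obtained by applying the fiberwise localization construction of Proposition \ref{prop:fiberwise_loc} to $p$ and $p'$. By the implication (a)$\implies$(b) of that proposition, I can factor $p$ as $\cal E\xrightarrow{f_{0}}\cal F\xrightarrow{q}\cal C$, where $q$ is a cartesian fibration, $f_{0}$ preserves cartesian morphisms, each $f_{0,C}:\cal E_{C}\to\cal F_{C}$ is a localization at the weak equivalences, and $f_{0}$ exhibits $\cal F$ as $\cal E[{\rm weqfib}\pr p^{-1}]$; concretely, $\cal F$ may be constructed by straightening $p$, post-composing with $L:\sf{RelCat}\to\Cat_{\infty}$, and unstraightening. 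The same construction applied to $p'$ produces $f_{0}':\cal E'\to\cal F'$ over a cartesian fibration $q':\cal F'\to\cal C$, with $\cal F'_{C}\simeq L\pr{\cal E'_{C}}$.

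Next, I would use the universal property (c) of Proposition \ref{prop:fiberwise_loc} to produce a comparison functor $\bar{f}:\cal F\to\cal F'$ over $\cal C$ with $\bar{f}\circ f_{0}\simeq f_{0}'\circ f$. For this, one must verify that $f_{0}'\circ f$ lies in $\Fun_{\cal C}^{{\rm cart},{\rm fib}}\pr{\cal E,\cal F'}$. The ``${\rm fib}$'' condition is immediate, since $f$ is relative and hence preserves fiberwise weak equivalences, while $f_{0}'$ inverts them. The ``${\rm cart}$'' condition uses the relativity of $f$ in a subtler way: if $u$ is a $p$-cartesian morphism, then $u$ is a weak equivalence in the ambient relative category of $\cal E$, so $f\pr u$ is a weak equivalence in $\cal E'$ and therefore factors as a fiberwise weak equivalence followed by a $p'$-cartesian morphism; applying $f_{0}'$ yields an equivalence composed with a $q'$-cartesian morphism, which is itself $q'$-cartesian.

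Finally, I would verify that $\bar{f}$ is a categorical equivalence by checking it on fibers. The relation $\bar{f}_{C}\circ f_{0,C}\simeq f_{0,C}'\circ f_{C}$ together with the universal property of $f_{0,C}$ as a localization identifies $\bar{f}_{C}$ with $L\pr{f_{C}}:L\pr{\cal E_{C}}\to L\pr{\cal E'_{C}}$, which is a categorical equivalence by hypothesis (3). A cartesian-preserving functor between cartesian fibrations over $\cal C$ that is fiberwise a categorical equivalence is itself a categorical equivalence, so $\bar{f}$ is a categorical equivalence. Since $\bar{f}$ is a model for the functor $\cal E[{\rm weqfib}\pr p^{-1}]\to\cal E'[{\rm weqfib}\pr{p'}^{-1}]$ induced by $f$, the conclusion follows. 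I expect the main obstacle to be the ``${\rm cart}$'' verification above, which depends on how $f$'s possible failure to preserve cartesian morphisms is corrected after post-composition with the fiberwise localization $f_{0}'$.
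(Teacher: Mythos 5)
Your proof is correct and follows essentially the same route as the paper's: apply the implications of Proposition \ref{prop:fiberwise_loc} to obtain cartesian fibrations $\overline{p}$ and $\overline{p'}$ together with a cartesian-morphism-preserving comparison $\overline{f}$ over $\mathcal{C}$, then check that $\overline{f}$ is fiberwise an equivalence using hypothesis (3). You make explicit the one step the paper compresses into a single phrase (``Condition (2) implies that $\overline{f}$ preserves cartesian morphisms''), namely the verification that the composite $\cal E\to\cal E'\to\cal E'[{\rm weqfib}(p')^{-1}]$ lies in $\Fun_{\cal C}^{\mathrm{cart},\mathrm{fib}}$: a $p$-cartesian edge $u$ is an ambient weak equivalence in $\cal E$, so $f(u)$ is an ambient weak equivalence in $\cal E'$, which factors as a fiberwise weak equivalence followed by a $p'$-cartesian edge, and the fiberwise localization sends this to an equivalence followed by a $q'$-cartesian edge, hence to a $q'$-cartesian edge. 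This is exactly the content the paper leaves implicit, and spelling it out is a genuine improvement in exposition rather than a different proof.
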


\begin{proof}
Using hypotheses (1) and (2) and the implication (b)$\implies$(c)
of Proposition \ref{prop:fiberwise_loc}, we obtain a commutative
diagram 
\[\begin{tikzcd}
	{\mathcal{E}[\mathrm{weqfib}(p)^{-1}]} && {\mathcal{E}'[\mathrm{weqfib}(p')^{-1}]} \\
	& {\mathcal{C},}
	\arrow["{\overline{f}}", from=1-1, to=1-3]
	\arrow["{\overline{p}}"', from=1-1, to=2-2]
	\arrow["{\overline{p'}}", from=1-3, to=2-2]
\end{tikzcd}\]where $\overline{f},\overline{p},\overline{p'}$ are functors
induced by $f,p,p'$, and $\overline{p}$ and $\overline{p'}$
are cartesian fibrations. Condition (2) implies that $\overline{f}$
preserves cartesian morphisms. Condition (3) and Proposition
\ref{prop:fiberwise_loc} imply that $\overline{f}$ induces
a categorical equivalence between the fibers of $\overline{p}$
and $\overline{p'}$. Hence $\overline{f}$ is a categorical
equivalence, as claimed.
\end{proof}
\begin{cor}
\label{cor:rel_cart_L}Let $p:\cal E\to\cal C$ be a relative
cartesian fibration, and let $f:\cal D\to\cal C$ be an initial
functor of $\infty$-categories. The functor
\[
L\pr{\cal E\times_{\cal C}\cal D^{{\rm }}}\to L\pr{\cal E}
\]
is a categorical equivalence.
\end{cor}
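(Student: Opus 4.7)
My plan is to reduce the claim to a cofinality argument for colimits in $\Cat_{\infty}$, using the straightening equivalence for cartesian fibrations.

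First, I would observe that the pullback $q \colon \cal E \times_{\cal C} \cal D \to \cal D$ inherits a canonical structure of a relative cartesian fibration, with fiberwise weak equivalences pulled back from those of $p$ under the identification $\pr{\cal E \times_{\cal C} \cal D}_d \simeq \cal E_{f\pr d}$. Applying the equivalence between (a) and (b) of Proposition \ref{prop:fiberwise_loc} to both $p$ and $q$, and noting that fiberwise localization is visibly compatible with the base change along $f$ (since the map $\cal E \times_\cal C \cal D \to \cal E[{\rm weqfib}^{-1}] \times_\cal C \cal D$ is fiberwise a localization, hence itself a fiberwise localization over $\cal D$), we may replace $\cal E$ by $\cal E[{\rm weqfib}^{-1}]$ without affecting $L\pr{\cal E}$ or $L\pr{\cal E \times_{\cal C}\cal D}$. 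Thus we reduce to the case in which $p$ is an honest cartesian fibration whose fiberwise weak equivalences are just the equivalences in each fiber.

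Next, let $F \colon \cal C^{\op} \to \Cat_{\infty}$ denote the functor classifying $p$; the pullback $q$ is then classified by $F \circ f^{\op}$. The standard identification of the localization of a cartesian fibration at its cartesian morphisms with the $\infty$-categorical colimit of its classifying functor (obtained, for instance, by applying \cite[Corollary 3.3.4.3]{HTT} to the opposite cocartesian fibration) yields natural equivalences
\[
L\pr{\cal E} \simeq \colim_{\cal C^{\op}} F, \qquad L\pr{\cal E \times_{\cal C} \cal D} \simeq \colim_{\cal D^{\op}} \pr{F \circ f^{\op}},
\]
under which the map in question is identified with the canonical map of colimits induced by $f^{\op}$.

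Finally, the hypothesis that $f$ is initial is precisely the statement that $f^{\op}$ is cofinal, so the induced map $\colim_{\cal D^{\op}}\pr{F \circ f^{\op}} \to \colim_{\cal C^{\op}} F$ is a categorical equivalence. The main subtlety I anticipate is ensuring sufficient naturality in the identifications of the second step to correctly recognise the comparison map; this I expect to be a bookkeeping matter given a careful formulation of the straightening/unstraightening equivalence, rather than a genuine obstacle.
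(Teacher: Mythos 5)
Your proof is correct and follows the same two-step strategy as the paper: first use Proposition \ref{prop:fiberwise_loc} to replace $\cal E$ by its fiberwise localization $\cal E[{\rm weqfib}^{-1}]$, reducing to the case of an honest cartesian fibration, and then appeal to a cofinality result from \cite{HTT}. The only difference is in the final step, where the paper cites \cite[Lemma 3.3.4.1]{HTT} directly at the level of marked simplicial sets, whereas you straighten to a diagram $F\colon\cal C^{\op}\to\Cat_{\infty}$, identify each localization with a colimit, and then invoke cofinality of $f^{\op}$; these are equivalent formulations and your route is equally valid.
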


\begin{proof}
Set $\cal E'=\cal E\times_{\cal C}\cal D$, and let $p':\cal E'\to\cal D$
denote the projection. We also factor $p$ as $\cal E\xrightarrow{i}\cal E[{\rm weqfib}\pr p^{-1}]\xrightarrow{q}\cal C$,
where $i$ is a localization at ${\rm fibweq}\pr p$ and $q$
is a categorical fibration. We will write $q':\cal E[{\rm weqfib}\pr p^{-1}]\times_{\cal C}\cal D\to\cal D$
for the projection.

By the implication (a)$\implies$(b) of Proposition \ref{prop:fiberwise_loc},
the functor
\[
\cal E'[{\rm weqfib}\pr{p'}^{-1}]\to\cal E[{\rm weqfib}\pr p^{-1}]\times_{\cal D}\cal C
\]
is a categorical equivalence. Therefore, it suffices to show
that the functor
\[
\pr{\cal E[{\rm weqfib}\pr p^{-1}]\times_{\cal D}\cal C}[{\rm cart}\pr{q'}^{-1}]\to\cal E[{\rm weqfib}\pr p^{-1}][{\rm cart}\pr q^{-1}]
\]
is a categorical equivalence. Since $f$ is initial, this follows
from \cite[Lemma 3.3.4.1]{HTT}.
\end{proof}

\subsection*{Acknowledgment}

The author acknowledges the work of Michael Mandell \cite{Man10},
which this paper builds on extensively. He thanks the anonymous
referees for valuable comments, and is grateful to Maxime Ramzi
and Tobias Lenz for helpful discussions. He also appreciates
Daisuke Kishimoto and Mitsunobu Tsutaya for constant support
and encouragement. This work was supported by JSPS KAKENHI Grant
Number 24KJ1443.

\providecommand{\bysame}{\leavevmode\hbox to3em{\hrulefill}\thinspace}
\providecommand{\MR}{\relax\ifhmode\unskip\space\fi MR }
\providecommand{\MRhref}[2]{%
  \href{http://www.ams.org/mathscinet-getitem?mr=#1}{#2}
}
\providecommand{\href}[2]{#2}

\end{document}